\documentclass[preprint,12pt,3p]{elsarticle}

\usepackage{amssymb}
\usepackage{amsmath,amsthm}
\usepackage{mathrsfs}
\usepackage{hyperref}
\usepackage{cleveref}
\usepackage[all,cmtip]{xy}
\usepackage{epsf, graphicx}
\usepackage{latexsym,amsfonts,amsbsy,amssymb}
\usepackage{geometry}
\usepackage{titletoc}
\usepackage{color,xcolor}
\usepackage{rotating}
\usepackage{algorithm}
\usepackage{algorithmic}
\usepackage{amscd}

\makeatletter

\@addtoreset{equation}{section} \makeatother
\newtheorem{theorem}{Theorem}[section]
\newtheorem{lemma}[theorem]{Lemma}

\newtheorem{remark}[theorem]{Remark}

\newtheorem{definition}[theorem]{Definition}
\newtheorem{proposition}[theorem]{Proposition}
\newtheorem{guess}[theorem]{Guess}

\def\Res{{\rm Res}}

\def\Inf{{\rm Inf}}

\def\Aut{{\rm Aut}}

\def\Hom{{\rm Hom}}

\def\End{{\rm End}}

\def\O{\mathcal{O}}

\def\F{\mathcal{F}}

\def\P{\mathcal{P}}
\def\Z{\mathbb{Z}}

\makeatletter
\def\ps@pprintTitle{%
\let\@oddhead\@empty
\let\@evenhead\@empty
\def\@oddfoot{\reset@font\hfil\thepage\hfil}
\let\@evenfoot\@oddfoot
}
\makeatother

\begin{document}

\begin{frontmatter}
	
	\title{Fusion-stable endosplit $p$-permutation resolutions}

	\author{Xin Huang}
	
	
	\begin{abstract}
Let $k$ be a field of characteristic $p>0$, $\mathcal{F}$ a saturated fusion system over a finite $p$-group $P$, and $V$ an indecomposable capped endopermutation $kP$-module. Let $D_k^\Omega(P)$ be the subgroup of the Dade group $D_k(P)$ generated by all the relative syzygies. It is known that $V$ has an endosplit $p$-permutation resolution if and only if the Dade class $[V]$ belongs to $D_k^\Omega(P)$. We show that the resolution can be chosen to be $\mathcal{F}$-stable if and only if $V$ is $\mathcal{F}$-stable. As an application, we prove the following folklore result: if two blocks of finite groups are Morita equivalent via a bimodule with an endopermutation $kP$-source $V$ such that $[V]\in D_k^\Omega(P)$, then these two blocks are splendidly Rickard equivalent.
	\end{abstract}
	
	\begin{keyword}
	fusion system \sep endopermutation module \sep endosplit $p$-permutation resolution
	\end{keyword}
	
\end{frontmatter}


\section{Introduction}\label{s1}

Throughout this paper, $p$ is a prime and $k$ is a field of characteristic $p$. Let $P$ be a finite $p$-group. We denote by $D_k(P)$ the Dade group of $P$ over $k$ (see e.g. \cite[\S3]{Th07}). Let $X$ be a finite $P$-set and let $kX$ be the corresponding permutation $kP$-module. Let $\Omega_X(k)$ be the kernel of the augmentation homomorphism $kX\to k$ mapping every basis element in $X$ to 1. The $kP$-module $\Omega_X(k)$ is called a {\it relative syzygy} of $k$. By a result due to Alperin \cite{Alp01}, $\Omega_X(k)$ is an endopermutation $kP$-module.
Let $D_k^\Omega(P)$ be the subgroup of $D_k(P)$ generated by all the relative syzygies $\Omega_X(k)$, where $X$ runs over all non-empty finite $P$-sets. Equivalently, $D_k^\Omega(P)$ is generated by the classes $[\Omega_{P/Q}(k)]$, where $Q$ runs over the proper subgroups of $P$; see \cite[Lemma 5.2.3]{Bouc:relative syzygies}. According to the classification of endopermutation modules, if there are no subgroups $R\unlhd Q\leq P$ such that $Q/R\cong Q_8$ (the quaternion group of order $8$), then $D_k(P)=D_k^\Omega(P)$; see \cite[Theorem 9.5]{Bouc:The Dade group} or \cite[Theorems 13.2 and 13.3]{Th07}. It is also known that an endopermutation $kP$-module $V$ has an endosplit $p$-permutation resolution if and only if its class $[V]$ lies in $D_k^\Omega(P)$; see \cite[Theorem 14.3]{Th07}. We refer to \cite[Definition 2.2]{AKO} or \cite[Definition 8.1.6]{Lin18b} for the definition of a saturated fusion system. The main result of this paper is the following:

\begin{theorem}\label{theo:main}
Let $P$ be a finite $p$-group, $\F$ a fusion system over $P$, and $V$ an indecomposable capped endopermutation $kP$-module with $[V]\in D_k^\Omega(P)$. If $V$ has an $\F$-stable endosplit $p$-permutation resolution, then $V$ is $\F$-stable. Conversely, if $V$ is $\F$-stable and $\F$ is saturated, then $V$ has an $\F$-stable endosplit $p$-permutation resolution.
\end{theorem}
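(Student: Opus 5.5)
Recall the relevant notions. An endosplit $p$-permutation resolution of $V$ is a bounded complex $Y$ of $p$-permutation $kP$-modules with the following properties: its homology is concentrated in one degree, where it is isomorphic to $V$; and $\End_k(Y)\cong Y\otimes_k Y^*$ is split, that is, homotopy equivalent to its homology concentrated in degree $0$. Call $Y$ \emph{$\F$-stable} if for every $Q\le P$ and every $\varphi\in\Hom_\F(Q,P)$ we have ${}_\varphi(\Res^P_{\varphi(Q)}Y)\simeq \Res^P_Q Y$, either as complexes up to homotopy or, in the strong version, up to isomorphism. Here $\F$-stability of $V$ is the analogous condition on modules, up to isomorphism of the cap, i.e.\ in the Dade group.

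\textbf{First implication.} Taking homology commutes with restriction and with twisting by $\varphi$. So ${}_\varphi\Res Y\simeq\Res Y$ gives ${}_\varphi\Res_{\varphi(Q)}V\cong \Res_Q V$, because $V$ is the unique nonzero homology of $Y$. Hence $V$ is $\F$-stable. This uses only the definitions, and no saturation is needed.

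\textbf{Converse.} I would build $Y$ as a tensor product of standard resolutions of relative syzygies, and then stabilize by an averaging, or ``induction'', construction. The case of a single relative syzygy already works: for a $P$-set $X$, the complex
\[
C_X: kX\to k
\]
is an endosplit $p$-permutation resolution of $\Omega_X(k)$. This is the standard fact behind \cite[Theorem 14.3]{Th07}.

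Next, since $\F$ is saturated, there is a characteristic biset $\Omega$ for $\F$ (Broto–Levi–Oliver). Given an $\F$-stable $V$ with $[V]\in D^\Omega_k(P)$, write
\[
[V]=\sum_i n_i[\Omega_{X_i}(k)].
\]
The key observation is that $\F$-stable elements of $D_k(P)$ are recovered by transfer along $\Omega$, up to the factor $|\Omega|/|P|$, which is prime to $p$. So one can replace each $X_i$ by the $P$-set $\Omega\times_P X_i$, restricted appropriately; this is the tensor induction or ``stabilization'' of Dade-group elements by $\Omega$. Here one also uses that $D^\Omega_k$ is preserved by Bouc's biset-functor operations (deflation–inflation and restrictions along $\F$-morphisms). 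Doing so expresses $[V]$ as a combination of relative syzygies $\Omega_{X}(k)$ with $X$ an $\F$-stable $P$-set.

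Negative multiplicities correspond to duals, and $C_X^*$ resolves $\Omega_X(k)^*$. Tensor products of endosplit resolutions are again endosplit resolutions, resolving the tensor product. So
\[
Y=\bigotimes_i C_{X_i}^{\otimes n_i}
\]
(with duals for negative $n_i$) resolves $V\oplus(\text{capped-trivial part})$. Each factor is $\F$-stable as a complex whenever $X_i$ is $\F$-stable, because ${}_\varphi\Res X\cong \Res X$ as $Q$-sets implies the same for $C_X$. Hence $Y$ is $\F$-stable.

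\textbf{Main obstacle.} The hard step is passing from a resolution of a module Dade-equivalent to $V$ to a resolution of $V$ itself. One must replace $Y$ by a complex with homology exactly $V$ while keeping $\F$-stability. I would cut out the indecomposable cap: $\End(Y)$ is split, so the homology $W$ of $Y$ decomposes as $W\cong V\oplus W'$. Take the primitive idempotent in $\End_{kP}(W)$ projecting to $V$, and lift it through the homotopy equivalence
\[
\End(Y)\simeq \End(W)
\]
to an idempotent endomorphism of $Y$ up to homotopy. Its image, a summand of $Y$ in $K^b$, resolves $V$ and is again endosplit.

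It remains to check $\F$-stability. For $\varphi$ in $\F$, the summand of ${}_\varphi\Res Y$ resolving the cap of ${}_\varphi\Res V\cong\Res V$ is, by Krull–Schmidt in the homotopy category, uniquely determined up to homotopy as the unique indecomposable summand with capped homology. This follows from uniqueness of caps together with the fact that endosplit resolutions of a given module are unique up to homotopy, after tensoring by split complexes. Verifying this uniqueness is the step I expect to need the most care.

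Finally, the stabilization via $\Omega$ in the Dade group requires that the $p'$-factor $|\Omega|/|P|$ can be divided out. Since $D_k^\Omega(P)$ may have $2$-torsion, this needs care when $p=2$. If that is a problem, I would instead stabilize the complex directly: form the complex $\Omega\otimes_{kP}Y$ restricted to $P$, and cut out the summand via the uniqueness argument above.
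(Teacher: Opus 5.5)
Your converse hinges on writing $[V]=\sum_i n_i[\Omega_{X_i}(k)]$ with $\F$-stable $P$-sets $X_i$. Equivalently, via Proposition~\ref{prop:BY}, you need $f\in C(\F)$ with $\Psi_P(f)=[V]$. This is Proposition~\ref{prop:stable-lift}, the core of the paper, and your argument for it does not work.

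\begin{itemize}
\item The assignments $X\mapsto\omega_X$ and $X\mapsto[\Omega_X(k)]$ are not additive: $\omega_{X\sqcup X'}=\omega_X+\omega_{X'}-\omega_X\omega_{X'}$. So replacing $X_i$ by the $P$-set $\Omega\times_P X_i$ is not the action of the biset $\Omega$ on $D_k^\Omega(P)$. The function $\omega_{\Omega\times_P X}$ is only the indicator of the $\F$-saturation of the support of $\omega_X$. Nothing forces $\sum_i n_i[\Omega_{\Omega\times_P X_i}(k)]$ to equal $[V]$, or any multiple of it.
\item The genuine action of $\Omega$ does not multiply $\F$-stable classes by $|\Omega|/|P|$ either. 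A transitive summand $P\times_{(Q,\varphi)}P$ sends a stable class $a$ to $\mathrm{Ind}^P_{\varphi(Q)}\Res^P_{\varphi(Q)}(a)$. This vanishes for $Q=1$, since $D_k(1)=0$.
\item What survives of your idea is only the $p$-local statement, Lemma~\ref{lem:p-local}. The paper gets it from the Reeh--Yal\c{c}\i n exactness of $\F$-stable elements after localisation at $p$, which is the one place saturation is used.
\item To clear the $p'$-denominators you must show that the cokernel of $C(\F)\to D_k^\Omega(P,\F)$ has no $p'$-torsion. This is Lemma~\ref{lem:away-p}. It uses the Borel--Smith conditions through the projection $\pi_P$ onto $C_{\rm lin}(P)[1/p]$, plus a correction modulo $2$ when $p$ is odd. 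Contrary to your guess, odd $p$ is the case needing extra care: there $D_k^\Omega(P)$ has $2$-torsion prime to $p$, e.g.\ $[\Omega_P(k)]$ for $P$ cyclic.
\item Your fallback $k\Omega\otimes_{kP}Y$ does not escape this. For it to be endosplit you need complexes such as $\Res^P_Q(Y)\otimes_k({}_\varphi Y)^*$ to be split. The Brauer construction of such a complex at $R\leq Q$ has homology in degree $h_Y(R)-h_Y(\varphi(R))$. So the fallback already presupposes an $\F$-stable homological mark, which is exactly what the stable lift provides.
\end{itemize}

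Second, the uniqueness you invoke is false. Endosplit $p$-permutation resolutions of a fixed module are not unique up to homotopy, even after tensoring with split complexes. Take $P$ cyclic of odd prime order. Both $k[0]$ and the noncontractible vertex-$P$ summand of $k[2]\otimes_k(C_{P/1}^*)^{\otimes 2}$ resolve $k$, because $\Omega^2_P(k)\cong k$. Yet their Brauer constructions at $P$ have homology in degrees $0$ and $2$ respectively.

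This homological mark $h_Y$ is exactly the invariant the paper controls. It builds $Y$ with $h_Y=f\in C(\F)$. It then uses Miller's injectivity of the homological mark (Lemma~\ref{lem:h-marks}(iii)) to match the vertex-$Q$ summands of $\Res^P_Q(Y)$ and ${}_\varphi Y$.

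Your architecture could in fact be completed once an integral stable lift $f$ is available. Fix a characteristic biset $\Omega$. Then $C(\F)$ has a $\Z$-basis consisting of the constant function together with the $\omega_X$ for $X=\Omega\times_P P/Q$. Here $Q$ runs over representatives of $\F$-classes of proper subgroups, and each such $\omega_X$ is supported on the $\F$-subconjugates of $Q$. Your tensor product $C$ built from these sets satisfies ${}_\varphi C\cong\Res^P_Q(C)$. Uniqueness is then needed only inside the single complex $\Res^P_Q(C)$. There it follows from $\End_{K^b(kQ)}(\Res^P_QC)\cong\End_{kQ}(H_0(\Res^P_QC))$ and uniqueness of caps, as in the appendix.

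Finally, Definition~\ref{def:fusion-stability} compares sets of indecomposable summands with vertex $Q$, not whole complexes up to homotopy.
\begin{itemize}
\item For the first implication you need the vertex-preserving correspondence between noncontractible summands and summands of $H_0$ (Linckelmann's Theorem 7.11.2).
\item For the converse you must also match the contractible vertex-$Q$ summands $D_i(k)$, as in Lemma~\ref{lem:contractible-padding}.
\end{itemize}
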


This will be proved in Section \ref{s3}, after reviewing some necessary background on endosplit $p$-permutation resolutions, fusion stability, and $p$-biset functors in Section \ref{s2:Preliminaries}. If $P$ is abelian, then $D_k^\Omega(P)=D_k(P)$ and Theorem \ref{theo:main} admits a much simpler proof, due to Linckelmann, which we present in Section \ref{s4}. By \cite[Theorems 9.11.2, 9.11.5 (i)]{Lin18b}, Theorem \ref{theo:main} implies the following folklore result:

\begin{theorem}\label{theorem:folklore}
Let $G$ and $H$ be finite groups, $b$ a block idempotent of $kG$ and $c$ a block idempotent of $kH$. Assume that $k$ is a splitting field for all subgroups of $G\times H$. Assume that $M$ is a $kGb$-$kHc$-bimodule inducing a Morita equivalence between $kGb$ and $kHc$. Assume further that when regarded as a $k(G\times H)$-module, $M$ has an endopermutation $kP$-source $V$ such that the class $[V]\in D_k^\Omega(P)$. Then $kGb$ and $kHc$ are splendidly Rickard equivalent.
\end{theorem}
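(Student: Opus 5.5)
The statement to prove is Theorem \ref{theorem:folklore}. The plan is to deduce it from Theorem \ref{theo:main} together with the criterion in \cite[Theorems 9.11.2, 9.11.5 (i)]{Lin18b}. That criterion lifts a Morita equivalence with endopermutation source to a splendid Rickard equivalence, provided the source admits an endosplit $p$-permutation resolution that is stable under the relevant fusion system.

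\textbf{Step 1: normalise the Morita equivalence.} Since $M$ induces a Morita equivalence between blocks, a vertex of $M$ as a $k(G\times H)$-module can be taken to be a twisted diagonal subgroup $\Delta P=\{(u,\varphi(u))\}$. Here $P$ is a defect group of $b$, identified via an isomorphism $\varphi$ with a defect group of $c$, and we identify $\Delta P$ with $P$. By Puig's results, as recorded in \cite{Lin18b}, the following hold after replacing $V$ by its cap if necessary (which does not change $[V]$):
\begin{itemize}
\item the source $V$ may be taken to be an indecomposable capped endopermutation $kP$-module;
\item with suitable choices of source idempotents, the fusion systems $\F$ of $b$ and $c$ on $P$ coincide;
\item $V$ is $\F$-stable.
\end{itemize}
The $\F$-stability of $V$ is exactly the compatibility condition appearing in the hypotheses of \cite[Theorem 9.11.5]{Lin18b}. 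The splitting field hypothesis ensures that $\F$ is the saturated fusion system of the block.

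\textbf{Step 2: produce the resolution.} Since $[V]\in D_k^\Omega(P)$, $V$ is $\F$-stable and $\F$ is saturated, the converse direction of Theorem \ref{theo:main} gives an $\F$-stable endosplit $p$-permutation resolution $Y$ of $V$.

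\textbf{Step 3: apply Linckelmann's construction.} This construction tensors the resolution $Y$, suitably extended to $\Delta P$-modules, with the bimodule. Concretely, take the complex $\mathrm{Ind}_{\Delta P}^{G\times H}$ built from the source idempotents and $Y$. Theorem 9.11.2 of \cite{Lin18b} shows it is a splendid Rickard complex whenever the resolution is $\F$-stable, and Theorem 9.11.5 (i) identifies its relation to $M$. This yields the desired splendid Rickard equivalence between $kGb$ and $kHc$.

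\textbf{Main obstacle.} The only nontrivial point is checking that the precise notion of $\F$-stability of the resolution used in the paper matches the hypothesis of \cite[Theorem 9.11.2]{Lin18b}. I would verify this by comparing definitions: stability should mean that for every morphism $\varphi\colon Q\to R$ in $\F$, the restrictions $\mathrm{Res}_Q Y$ and ${}_\varphi\mathrm{Res}_R Y$ are homotopy equivalent, or isomorphic, as complexes. All remaining steps are formal.
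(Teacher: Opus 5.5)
Your proposal follows the paper's route: the paper deduces Theorem \ref{theorem:folklore} in one line, by combining the converse direction of Theorem \ref{theo:main} (applied to the saturated block fusion system, for which $V$ is stable) with \cite[Theorems 9.11.2, 9.11.5 (i)]{Lin18b}. One correction to your ``main obstacle'': the paper's notion of $\F$-stability of a resolution is taken verbatim from \cite[Definition 9.9.2]{Lin18b} and asks only that $\Res_Q^P(Y)$ and ${}_\varphi Y$ have the same isomorphism classes of vertex-$Q$ indecomposable summands, not that they be isomorphic or homotopy equivalent (which is stronger than what Theorem \ref{theo:main} provides), so the two notions agree by construction and nothing further needs checking.
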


In light of Theorem \ref{theorem:folklore}, \cite[Proposition 1.5]{H25} and \cite[Theorem 1.1]{H26}, we make the following guess:

\begin{guess}\label{guess}
Let $\O$ be a complete discrete valuation ring of characteristic $0$ with residue field $k$. Let $G$ and $H$ be finite groups, $b$ a block idempotent of $\O G$ and $c$ a block idempotent of $\O H$. Assume that $k$ is a splitting field for all subgroups of $G\times H$. Let $\bar{b}$ and $\bar{c}$ be the images of $b$ and $c$ in $kG$ and $kH$, respectively. Assume that $M$ is a $kG\bar{b}$-$kH\bar{c}$-bimodule inducing a Morita equivalence between $kG\bar{b}$ and $kH\bar{c}$. Assume that as a $k(G\times H)$-module, $M$ has an endopermutation $kP$-source $V$. Then the following are equivalent:
\begin{enumerate}[{\rm (i)}]
	\item $[V]\in D_k^\Omega(P)$.
	\item  $b$ and $c$ are globally isotypic.
	\item $b$ and $c$ are $p$-permutation equivalent.
		\item $b$ and $c$ are splendidly Rickard equivalent.
\end{enumerate}

\end{guess}

By a {\it global isotypy}, we mean an isotypy in \cite[Definition 9.5.1]{Lin18b}.

\section{Preliminaries}\label{s2:Preliminaries}

\subsection{Endosplit $p$-permutation resolutions and fusion stability}

Let $P$ be a finite $p$-group, and let $V$ be a $kP$-module. If ${\rm End}_k(V)$ admits a $P$-stable $k$-basis under the conjugation action, then $V$ is called an {\it endopermutation} $kP$-module, as defined in \cite{Dade}. An endopermutation $kP$-module $V$ is called {\it capped} if it has an indecomposable direct summand with vertex $P$. Any such summand is called a {\it cap} of $V$, and all caps of $V$ are isomorphic; see e.g. \cite[\S3]{Th07}. Thus a capped endopermutation module need not itself be indecomposable. In particular, an indecomposable capped endopermutation $kP$-module is precisely an indecomposable endopermutation $kP$-module with vertex $P$. For a bounded complex $X$ of $kG$-modules, we write $X^*=\Hom_k(X,k)$ for the dual complex, with $(X^*)_i=\Hom_k(X_{-i},k)$ and the usual differential; see e.g. \cite[Definition 1.17.10]{Lin18b}. We write $X\simeq Y$ if the complexes $X$ and $Y$ are homotopy equivalent.

\begin{definition}[{\cite[\S7]{Rickard}}; see also {\cite[Definition 7.11.1]{Lin18b}}]
{\rm Let $G$ be a finite group and let $M$ be a finitely generated $kG$-module. An {\it endosplit $p$-permutation resolution of $M$} is a bounded complex $X$ of finitely generated $p$-permutation $kG$-modules with homology concentrated in degree $0$ such that $H_0(X)\cong M$ and such that the complex $X\otimes_k X^*$ is a split complex of $kG$-modules, with $G$ acting diagonally on the tensor product.}
\end{definition}

\begin{definition}[{\cite[Definitions 9.9.1, 9.9.2]{Lin18b}}]\label{def:fusion-stability}
{\rm Let $\F$ be a fusion system over a finite $p$-group $P$. If $Q\leq P$, $\varphi:Q\to P$ is a morphism in $\F$, and $U$ is a $kP$-module, then ${}_\varphi U$ denotes the $kQ$-module obtained from $U$ by restriction along $\varphi$, so that $q\cdot u=\varphi(q)u$ for $q\in Q$ and $u\in U$; for a complex $U$, the notation is applied termwise. Let $V$ be an endopermutation $kP$-module. We say that $V$ is {\it $\F$-stable} if for any subgroup $Q$ of $P$ and any morphism $\varphi:Q\to P$ in $\F$, the sets of isomorphism classes of indecomposable direct summands with vertex $Q$ of the $kQ$-modules ${\rm Res}_Q^P(V)$ and ${}_\varphi V$ are equal (including the possibility that both sets may be empty). This is equivalent to requiring that ${\rm Res}_Q^P(V)\oplus {}_\varphi V$ is an endopermutation $kQ$-module; see \cite[Corollary 6.12]{Dade}. Assume that $V$ has an endosplit $p$-permutation resolution $Y$. Following \cite[Definition 9.9.2]{Lin18b}, we say that $Y$ is {\it $\F$-stable} if for any subgroup $Q$ of $P$ and any morphism $\varphi:Q\to P$ in $\F$, the sets of isomorphism classes of indecomposable direct summands with vertex $Q$ of the complexes $\Res_Q^P(Y)$ and ${}_\varphi Y$ are equal.}
\end{definition}

\subsection{Biset functors and fusion-stable elements}

\begin{definition}[{\cite[Chapters 2 and 3]{Bouc10}}]\label{def:p-biset-functor}
{\rm Let $G$ and $H$ be finite $p$-groups. An $(H,G)$-biset $U$ is a finite set equipped with a left action of $H$ and a right action of $G$ such that the two actions commute. Denote by $B(H,G)$ the Grothendieck group of the commutative monoid of isomorphism classes of finite $(H,G)$-bisets, where addition is induced by disjoint union. The {\it $p$-biset category} $\mathcal{B}_p$ has the finite $p$-groups as its objects and
$$
 \Hom_{\mathcal{B}_p}(G,H)=B(H,G).
$$
If $U$ is an $(H,G)$-biset and $W$ is a $(K,H)$-biset, let $\sim$ be the equivalence relation on $W\times U$ generated by
$$
 (wh,u)\sim(w,hu)\qquad(w\in W,\ u\in U,\ h\in H).
$$
The {\it balanced product} of $W$ and $U$ over $H$ is the quotient
$$
 W\times_H U=(W\times U)/{\sim}.
$$
It is a $(K,G)$-biset, and composition in $\mathcal{B}_p$ is defined by
$$
 [W]\circ[U]=[W\times_H U].
$$
The identity morphism of the object $G$ is represented by the regular $(G,G)$-biset $G$. A {\it $p$-biset functor} is an additive functor $M$ from $\mathcal{B}_p$ to the category  $\mathbf{Ab}$ of abelian groups. A morphism of $p$-biset functors is a natural transformation, and a sequence of $p$-biset functors is {\it exact} if its evaluation at every finite $p$-group is exact.}
\end{definition}

The equality $\Hom_{\mathcal{B}_p}(G,H)=B(H,G)$ fixes the direction in which a biset is used: an $(H,G)$-biset $U$ represents a morphism from $G$ to $H$. In other words, the group acting on the right is the source, while the group acting on the left is the target. Consequently, a $p$-biset functor $M$ sends $U$ to a homomorphism
$$
 M(U):M(G)\longrightarrow M(H).
$$
For example, if $H\leq G$, then the $(H,G)$-biset $G$ induces the restriction map $\Res_H^G:M(G)\to M(H)$, whereas the $(G,H)$-biset $G$ induces the induction map $\operatorname{Ind}_H^G:M(H)\to M(G)$.

Let $Q$ and $P$ be finite $p$-groups and let $\varphi:Q\to P$ be an injective homomorphism. Denote by ${}_\varphi P$ the $(Q,P)$-biset whose underlying set is $P$ and whose action is given by $q\cdot x\cdot u=\varphi(q)xu$. For a $p$-biset functor $M$, we write
$$
 \varphi^*=M({}_\varphi P):M(P)\longrightarrow M(Q).
$$
If $\varphi$ is the inclusion of a subgroup $Q\leq P$, this map is the usual restriction $\Res_Q^P$. If $\varphi$ is an isomorphism, $\varphi^*$ is a transport of structure. In general, $\varphi^*$ is the restriction to $\varphi(Q)$ followed by the transport of structure from $\varphi(Q)$ to $Q$.

\begin{definition}[{\cite[Definition 2.2]{RY18}}]\label{def:stable-elements}
{\rm Let $\F$ be a fusion system over a finite $p$-group $P$, and $M$ a $p$-biset functor. An element $a\in M(P)$ is called {\it $\F$-stable} if
$$
 \Res_Q^P(a)=\varphi^*(a)
$$
for any subgroup $Q\leq P$ and any $\varphi\in\Hom_\F(Q,P)$. The subgroup of $\F$-stable elements is denoted by
$$
 M(\F)=\{a\in M(P)\mid \Res_Q^P(a)=\varphi^*(a)\text{ for all }Q\leq P\text{ and }\varphi\in\Hom_\F(Q,P)\}.
$$}
\end{definition}

Let $D_k(P,\F)$ be the subgroup of $D_k(P)$ consisting of those classes whose caps are $\F$-stable in the sense of Definition \ref{def:fusion-stability}; see \cite[Definition 3.3 and Proposition 3.4]{LM09}. We write
$$
 D_k^\Omega(P,\F)=D_k^\Omega(P)\cap D_k(P,\F).
$$
For the $p$-biset functor $D_k^\Omega$, this intersection is precisely its subgroup of $\F$-stable elements in the sense of Definition \ref{def:stable-elements}. Let
$$
 \Z_{(p)}=\left\{\frac{a}{b}\in\mathbb{Q}\mathrel{\big|}a,b\in\Z,\ p\nmid b\right\}
$$
be the localisation of $\Z$ at the prime ideal $(p)$. For an abelian group $A$, write $A_{(p)}=\Z_{(p)}\otimes_\Z A$.

\begin{proposition}[{\cite[Proposition 2.6]{RY18}}]\label{prop:stable-elements}
Let $\F$ be a saturated fusion system over $P$. If
$$
 0\longrightarrow M_1\longrightarrow M_2\longrightarrow M_3\longrightarrow 0
$$
is an exact sequence of $p$-biset functors, then
$$
 0\longrightarrow M_1(\F)_{(p)}\longrightarrow M_2(\F)_{(p)}\longrightarrow M_3(\F)_{(p)}\longrightarrow 0
$$
is exact.
\end{proposition}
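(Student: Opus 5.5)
The plan is to realise the $\F$-stable elements as the image of an idempotent acting on $M(P)_{(p)}$, and then use that the image of a natural idempotent is an exact operation.

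\textbf{Step 1: reduce to a statement about $\Z_{(p)}$-modules.} Since $\Z_{(p)}$ is flat over $\Z$, localisation commutes with kernels. The group $M(\F)$ is the kernel of the map
$$M(P)\to\prod_{Q,\varphi}M(Q),\qquad a\mapsto \bigl(\Res^P_Q(a)-\varphi^*(a)\bigr)_{Q,\varphi},$$
so $M(\F)_{(p)}$ is the subgroup of $\F$-stable elements of $M(P)_{(p)}$. Every $M_i(P)_{(p)}$ is a module over the double Burnside ring $A(P,P)_{(p)}=\Z_{(p)}\otimes_\Z B(P,P)$. The morphisms $M_1\to M_2\to M_3$ are natural, so they are $A(P,P)_{(p)}$-linear. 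Moreover, the sequence $0\to M_1(P)_{(p)}\to M_2(P)_{(p)}\to M_3(P)_{(p)}\to 0$ is exact.

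\textbf{Step 2: the characteristic idempotent.} Since $\F$ is saturated, I invoke the Broto--Levi--Oliver characteristic biset and its refinement by Ragnarsson--Stancu. This gives an idempotent $\omega=\omega_\F\in A(P,P)_{(p)}$ with three properties:
\begin{enumerate}
\item $\omega$ is a $\Z_{(p)}$-combination of transitive bisets $[P\times_{(Q,\varphi)}P]$ with $\varphi\in\Hom_\F(Q,P)$;
\item $\omega$ is left and right $\F$-stable, meaning $[{}_\varphi P]\circ\omega=[{}_QP]\circ\omega$ in $A(Q,P)_{(p)}$, and symmetrically on the right;
\item $\omega$ has augmentation $1$ (more precisely, its image in $A(P,P)_{(p)}$ modulo non-identity-type summands has the form needed below).
\end{enumerate}

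The goal of this step is the equality
$$M(\F)_{(p)}=M(\omega)\bigl(M(P)_{(p)}\bigr).$$
One inclusion comes from left stability. It gives $\Res^P_Q\circ M(\omega)=\varphi^*\circ M(\omega)$, so every $M(\omega)a$ is $\F$-stable.

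For the reverse inclusion, let $a$ be $\F$-stable. Each summand $[P\times_{(Q,\varphi)}P]$ acts as $\operatorname{Ind}_Q^P\circ\varphi^*$, and $\varphi^*a=\Res^P_Qa$ by stability. Hence $M(\omega)a=M(X)a$, where $X$ is obtained from $\omega$ by replacing every $\varphi$ with the inclusion $Q\le P$. It remains to show $M(X)a=a$. I would derive this from $\omega\circ\omega=\omega$ together with right $\F$-stability, which allows replacing the $\varphi$'s in the right-hand factor. I would combine this with the normalisation of $\omega$ in Ragnarsson--Stancu's characterisation: $\omega$ is the unique idempotent that is bistable and has augmentation $1$.

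\textbf{Step 3: conclude exactness.} Apply the idempotent $M_i(\omega)$ to the exact sequence of Step 1. The maps commute with $\omega$, and the image of an idempotent is a natural direct summand. So the sequence
$$0\to\omega M_1(P)_{(p)}\to\omega M_2(P)_{(p)}\to\omega M_3(P)_{(p)}\to 0$$
is exact, because it is a direct summand of an exact sequence. By Step 2 this is exactly the sequence
$$0\to M_1(\F)_{(p)}\to M_2(\F)_{(p)}\to M_3(\F)_{(p)}\to 0.$$

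\textbf{Main obstacle.} I expect the hardest point to be the identity $M(\omega)a=a$ for $\F$-stable $a$. This requires using precisely the right normalisation of the characteristic idempotent, and checking that $\omega$ acts through a combination of $\operatorname{Ind}\circ\Res$ terms whose total effect on stable elements is the identity. I would first try to prove it directly from the bistability and idempotence relations in $A(P,P)_{(p)}$, and otherwise cite Ragnarsson--Stancu's description of stable elements as $\omega$-invariants.
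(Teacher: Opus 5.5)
The paper gives no proof of this statement; it quotes it from \cite{RY18}. So your proposal has to stand on its own. Your Steps 1 and 3 are fine: localisation is flat, and applying an idempotent of $A(P,P)_{(p)}$ to an exact sequence of $A(P,P)_{(p)}$-modules is exact. The inclusion ``image $\subseteq$ stable'' in Step 2 is also fine. The gap is the reverse inclusion, which is the whole content of the statement. After your reduction, an $\F$-stable $a$ satisfies
\[
M(\omega)a=\sum_{(Q,\varphi)}c_{Q,\varphi}\operatorname{Ind}_Q^P\Res^P_Q a .
\]
So what you need is exactly $\sum_{\varphi}c_{Q,\varphi}=\delta_{Q,P}$ for each $P$-class of subgroups $Q$. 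Since $[P\times_{(Q,\varphi)}P]\times_P\mathrm{pt}\cong P/Q$, this is the single identity $\theta:=\omega\times_P\mathrm{pt}=[P/P]$ in $B(P)_{(p)}$. That identity is the case $M=B$, $a=[P/P]$ of your claim, and conversely it implies the claim for every $M$. Augmentation $1$ only gives $|\theta|=1$, and your item (3) does not state what is needed.

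Moreover, this identity cannot be obtained formally from idempotence, bistability, $\F$-generation and augmentation $1$, as you propose. Take $\F=\F_P(P)$ with $P\neq 1$. The element $e=|P|^{-1}[P\times P]$ has all four properties over $\mathbb{Q}$. Yet $e\times_P\mathrm{pt}=|P|^{-1}[P/1]$, and $M(e)=|P|^{-1}\operatorname{Ind}_1^P\Res^P_1$ does not fix stable elements. So the $p$-local integrality of $\omega$ must enter.

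Here is one way to supply the missing step.
\begin{enumerate}
\item By $\F$-generation and bistability, the mark of $\omega$ at a twisted diagonal $\{(r,\varphi(r))\mid r\in R\}$ is $0$ unless $\varphi\in\Hom_\F(R,P)$. In that case it equals a value $w(R)$ that is independent of $\varphi$ and constant on $\F$-classes.
\item The composition formula for marks turns $\omega\circ\omega=\omega$ into $w(R)=\frac{|\Hom_\F(R,P)|}{|P|}\,w(R)^2$.
\item Counting $R$-fixed right $P$-orbits gives $|\theta^R|=\frac{|\Hom_\F(R,P)|}{|P|}\,w(R)$. By the previous item this lies in $\{0,1\}$.
\item Since $\theta\in B(P)_{(p)}$, its marks satisfy $|\theta^R|\equiv|\theta^{R'}|\pmod p$ whenever $R\trianglelefteq R'$ with $|R':R|=p$. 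Hence they are constant, equal to $|\theta|=\epsilon(\omega)=1$, and $\theta=[P/P]$.
\end{enumerate}
Alternatively, quote Reeh's formula $|P|/|\Hom_\F(R,P)|$ for these marks of $\omega_\F$. If you prefer to cite a result on stable elements being $\omega$-invariant, it is enough to have it for the single stable element $[P/P]\in B(P)$. With this inserted, your argument is complete.
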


This proposition is the only place where saturation of $\F$ is used. In particular, it allows us to pass from an exact sequence of $p$-biset functors to an exact sequence of their $\F$-stable parts after localisation at $p$.

\section{Proof of Theorem \ref{theo:main}}\label{s3}

Throughout this section, $P$ denotes a finite $p$-group and $\F$ denotes a fusion system over $P$. We first review the relation between relative syzygies and integer-valued superclass functions.

\subsection{Superclass functions and relative syzygies}

\begin{definition}[{\cite[Definition 1.4]{Bouc04}}]\label{def:superclass}
{\rm Denote by $C(P)$ the abelian group of all functions from the set of subgroups of $P$ to $\Z$ which are constant on the $P$-conjugacy classes.  If $X$ is a finite $P$-set, define $\omega_X\in C(P)$ by
$$
 \omega_X(Q)=
 \begin{cases}
  1 & \text{if }X^Q\neq\varnothing,\\
  0 & \text{if }X^Q=\varnothing.
 \end{cases}
$$
In particular, for $R\leq P$, we write $\omega_{P/R}$ for the function associated with the transitive $P$-set $P/R$.}
\end{definition}

Thus $\omega_{P/R}(Q)=1$ if and only if $Q$ is contained in a $P$-conjugate of $R$.  If one orders the $P$-conjugacy classes of subgroups by their orders, the matrix
$$
 \big(\omega_{P/R}(Q)\big)_{Q,R}
$$
is triangular with diagonal entries equal to $1$.  Therefore, we see the following lemma:

\begin{lemma}[Bouc {\cite[Lemma 2.2]{Bouc04}}]\label{lem:omega-basis}
Let $\mathscr{S}$ be a set of representatives of the $P$-conjugacy classes of subgroups of $P$.  Then
$$
 \{\omega_{P/R}\mid R\in\mathscr{S}\}
$$
is a $\Z$-basis of $C(P)$.
\end{lemma}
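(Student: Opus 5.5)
The plan is to prove two things: that the functions $\omega_{P/R}$, for $R\in\mathscr{S}$, span $C(P)$ over $\Z$, and that they are linearly independent. Both follow from the unitriangularity observation made just before the statement, so the work is to make that triangularity precise. First I note that each $\omega_{P/R}$ is indeed in $C(P)$: if $Q'={}^xQ$, then $(P/R)^{Q'}=x\cdot(P/R)^Q$. Hence $\omega_{P/R}(Q)$ depends only on the conjugacy class of $Q$, and it also depends only on the conjugacy class of $R$, since $P/R\cong P/{}^xR$ as $P$-sets. Next, $C(P)$ is a free abelian group with basis the indicator functions $\delta_Q$ of the conjugacy classes, for $Q\in\mathscr{S}$. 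So it suffices to show that the square matrix $A=(\omega_{P/R}(Q))_{Q,R\in\mathscr{S}}$ is invertible over $\Z$.

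For the entries, I use the standard fact that $(P/R)^Q\neq\varnothing$ if and only if $xR$ is fixed by $Q$ for some $x$, that is, $Q\leq {}^xR$. Thus $\omega_{P/R}(Q)=1$ exactly when $Q$ is subconjugate to $R$. Order $\mathscr{S}$ so that the orders $|R|$ are non-decreasing, say $R_1,\dots,R_n$. If $\omega_{P/R_j}(R_i)=1$, then $|R_i|\le |R_j|$. If moreover $|R_i|=|R_j|$, then $R_i={}^xR_j$, so $i=j$. Therefore $A$ is triangular with all diagonal entries equal to $1$ (because $R\leq R$). Its determinant is $1$, so $A\in GL_n(\Z)$.

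Concretely, this gives an explicit inductive argument. For spanning, given $f\in C(P)$, I proceed by downward induction on $|Q|$. Choose the coefficient of $\omega_{P/R_n}$ to match $f(R_n)$, subtract, and continue: at the step for $R_i$, the functions $\omega_{P/R_j}$ with $j<i$ vanish on $R_i$ unless $R_i$ is subconjugate to $R_j$, which cannot happen since $|R_j|\le|R_i|$ and $j\neq i$. For independence, suppose $\sum_j a_j\omega_{P/R_j}=0$. Evaluating at the largest $R_i$ whose coefficient is nonzero gives $a_i=0$, a contradiction.

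There is no real obstacle here. The only point requiring care is the fixed-point criterion $(P/R)^Q\neq\varnothing\iff Q\le_P R$, together with the observation that subconjugacy between subgroups of equal order forces conjugacy. Both are elementary.
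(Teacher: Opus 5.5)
Your proposal is correct and follows the same route as the paper: the paper observes that $\omega_{P/R}(Q)=1$ exactly when $Q$ is contained in a $P$-conjugate of $R$, so ordering the conjugacy classes by order makes the matrix $\big(\omega_{P/R}(Q)\big)_{Q,R}$ triangular with $1$'s on the diagonal. You also supply details the paper leaves implicit (the fixed-point criterion, the fact that subconjugacy between subgroups of equal order forces conjugacy, and the explicit back-substitution), and all of them are fine.
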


\begin{definition}[Bouc--Yal\c{c}\i n {\cite[Definition 3.1]{BY07}}]\label{def:borel-smith}
{\rm A function $f\in C(P)$ is called a {\it Borel--Smith function} if the following conditions hold:
\begin{enumerate}
\item if $p$ is odd and $S\trianglelefteq T\leq P$ with $T/S\cong C_p$, then $f(S)-f(T)$ is even;
\item if $S\trianglelefteq T\leq P$ with $T/S\cong C_p\times C_p$, then
 \begin{equation}\label{eq:BS-linear}
  f(S)-f(T)=\sum_{S<U<T}\big(f(U)-f(T)\big);
 \end{equation}
\item if $p=2$ and $S\trianglelefteq T\trianglelefteq N\leq N_P(S)$ with $T/S\cong C_2$, then $f(S)-f(T)$ is even when $N/S\cong C_4$, and is divisible by $4$ when $N/S\cong Q_8$.
\end{enumerate}
The subgroup of all Borel--Smith functions in $C(P)$ is denoted by $C_b(P)$.}
\end{definition}

\begin{proposition}[Bouc--Yal\c{c}\i n {\cite[Theorem 1.2]{BY07}}]\label{prop:BY}
There is an exact sequence of $p$-biset functors
\begin{equation}\label{eq:BY}
 0\longrightarrow C_b\longrightarrow C
 \stackrel{\Psi}{\longrightarrow}D_k^\Omega\longrightarrow 0,
\end{equation}
where
$$
 \Psi_P(\omega_{P/R})=
 \begin{cases}
  [\Omega_{P/R}(k)]&\text{if }R<P,\\
  0&\text{if }R=P.
 \end{cases}
$$
\end{proposition}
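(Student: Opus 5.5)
The plan is to build $\Psi$ as an explicit morphism of $p$-biset functors, check surjectivity, and then identify the kernel with $C_b$ by reducing to small sections. First I would fix $\Psi_P$ on the $\Z$-basis $\{\omega_{P/R}\}$ of $C(P)$ from Lemma \ref{lem:omega-basis}, using the displayed formula. The first goal is the closed formula
$$
\Psi_P(\omega_X)=[\Omega_X(k)]
$$
for every non-empty finite $P$-set $X$. For this I would use two of Bouc's facts about relative syzygies.
\begin{enumerate}
\item The class $[\Omega_X(k)]$ depends only on the set of subgroups $Q$ with $X^Q\neq\varnothing$, that is, only on $\omega_X$.
\item For non-empty $X,Y$ one has
$$
[\Omega_{X\sqcup Y}(k)]=[\Omega_X(k)]+[\Omega_Y(k)]-[\Omega_{X\times Y}(k)].
$$
\end{enumerate}
On the other side, $\omega_{X\times Y}=\omega_X\omega_Y$ and $\omega_{X\sqcup Y}=\omega_X+\omega_Y-\omega_X\omega_Y$. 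An induction on the number of orbits then gives the closed formula, with $\Omega_{P/P}(k)=0$ matching $\Psi_P(\omega_{P/P})=0$.

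Next I would verify that $\Psi$ commutes with the five elementary bisets: restriction, induction, inflation, deflation and transport of structure. By Bouc's decomposition of bisets, this makes $\Psi$ a morphism of $p$-biset functors. The formula $\Psi(\omega_X)=[\Omega_X(k)]$ makes each check concrete:
\begin{itemize}
\item restriction and transport: $\Res^P_Q\Omega_X(k)=\Omega_{\Res X}(k)$ and $\omega_{\Res X}=\Res\,\omega_X$;
\item inflation: the inflation of $\Omega_X(k)$ is $\Omega_{\Inf X}(k)$;
\item deflation: this uses the Bouc-type formula for $\mathrm{Def}$ of a relative syzygy, namely $\Omega_{X^N}$ when $X^N\neq\varnothing$ and $0$ otherwise;
\item induction: this is the tensor induction (multiplicative) formula for relative syzygies, compared with the induction of superclass functions.
\end{itemize}
Surjectivity of $\Psi$ is then immediate, because $D_k^\Omega(P)$ is generated by the $[\Omega_{P/R}(k)]$.

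The main obstacle is the identity $\ker\Psi_P=C_b(P)$. For $C_b\subseteq\ker\Psi$, I would use that an element of $D_k(P)$ is detected by its images under deflation–restriction to all sections $T/S$. This reduces the problem to showing that the image of a Borel--Smith function vanishes on sections of order $p$, on sections isomorphic to $C_p\times C_p$, and in the $p=2$ case on sections isomorphic to $C_4$ and $Q_8$; these are exactly the cases where the conditions of Definition \ref{def:borel-smith} live. In each of these small groups the Dade group is explicitly known:
\begin{itemize}
\item $D(C_p)$ is generated by $\Omega$ and has order $2$ (trivial if $p=2$);
\item $D(C_p\times C_p)$ is free on $\Omega$ and the inflated syzygies;
\item $D(Q_8)$ contains the torsion coming from $\Omega$ of period $4$.
\end{itemize}
The parity, linear and mod-$4$ conditions are precisely what kill the corresponding images.

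For the reverse inclusion, given $f\in\ker\Psi_P$, I would argue by induction on $|P|$. The inductive hypothesis gives $f(S)-f(T)$ of the required form on every proper section. The remaining condition at $P$ itself comes from the explicit computation in $D_k^\Omega(P)$ for $P\in\{C_p,\ C_p\times C_p,\ C_4,\ Q_8\}$. For all other $P$, every condition is already imposed on a proper section, so nothing new is needed. I expect this last bookkeeping, matching the Borel--Smith conditions exactly with the torsion and rank of the Dade group on these sections, to be the delicate part of the argument.
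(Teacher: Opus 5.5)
The paper does not prove this proposition; it quotes it from Bouc--Yal\c{c}\i n, so your outline has to stand on its own. Much of it is sound:
\begin{itemize}
\item defining $\Psi_P$ on the basis $\{\omega_{P/R}\}$ and deducing $\Psi_P(\omega_X)=[\Omega_X(k)]$ from Bouc's two relations;
\item checking compatibility with the elementary bisets via Bouc's deflation and tensor induction formulas;
\item proving $\ker\Psi\subseteq C_b$. By naturality, $\mathrm{Def}^T_{T/S}\Res^P_T$ maps $\ker\Psi_P$ into $\ker\Psi_{T/S}$. Each Borel--Smith condition is a condition at the top of a single section of type $C_p$, $C_p\times C_p$, $C_4$ or $Q_8$.
\end{itemize}

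\textbf{The gap.} It lies in the other inclusion, $C_b\subseteq\ker\Psi$. You invoke detection ``by deflation--restriction to all sections $T/S$''. That is vacuous, because $P/1$ is itself a section. What your reduction actually uses is a much stronger claim: an element of $D_k^\Omega(P)$ is zero as soon as its deflation--restrictions to all sections isomorphic to $C_p$, $C_p\times C_p$, $C_4$ or $Q_8$ vanish. This is not a lemma you can take for granted. Since $\Psi$ is surjective and natural, and the Borel--Smith conditions live exactly on these sections, this detection statement is equivalent to $C_b\subseteq\ker\Psi$, modulo the small-group computations. So the hard half of the theorem has been replaced by a reformulation of itself. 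A genuinely new input is needed here. One possibility is tom Dieck's theorem that, for a $p$-group, the Borel--Smith functions are exactly the dimension functions $H\mapsto\dim_{\mathbb{R}}V^H$ of virtual real representations $V$. You would combine it with an independent argument that $\Psi$ annihilates such dimension functions.

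\textbf{Further corrections.}
\begin{itemize}
\item Your description of $D_k(C_p\times C_p)$ is wrong, and as stated it contradicts the proposition: the inflated syzygies are not free generators. For $p$ odd each $\Inf\Omega_{C_p}(k)$ has order $2$, so $D_k(C_p\times C_p)\cong\Z\oplus(\Z/2\Z)^{p+1}$. For $p=2$ these inflations are trivial and $D_k(C_2\times C_2)\cong\Z$. With your description, take $U$ of order $p$ in $P=C_p\times C_p$. Then the Borel--Smith function $2\omega_{P/U}$ (for $p$ odd), respectively $\omega_{P/U}$ (for $p=2$), would have nonzero image under $\Psi_P$.
\item In your first step, induction on the number of orbits does not work as written, since $X_0\times P/R$ can have far more orbits than $X$. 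Induct instead on the size of the family $\{Q\le P: X^Q\neq\varnothing\}$. If this family equals that of a single $P/R$, your first fact finishes. Otherwise, let $R$ be a point stabiliser maximal under inclusion and $X_0$ the union of the orbits not isomorphic to $P/R$. Then $X_0$, $P/R$ and $X_0\times P/R$ all have strictly smaller families.
\end{itemize}
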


For the $p$-biset functor $C$, the map $\varphi^*$ introduced before Definition \ref{def:stable-elements} is given by
$$
 (\varphi^*f)(R)=f(\varphi(R))
$$
for $f\in C(P)$ and $R\leq Q$. Thus the stable-element subgroup from Definition \ref{def:stable-elements} is
$$
 C(\F)=\{f\in C(P)\mid \Res_Q^P(f)=\varphi^*f
 \text{ for all }Q\leq P\text{ and }\varphi\in\Hom_\F(Q,P)\}.
$$
In Proposition \ref{prop:BY}, $\Psi$ is a natural transformation between $p$-biset functors, hence $\Psi_P$ maps $C(\F)$ to $D_k^\Omega(P,\F)$.  The main point of the next subsection is that the restricted map $C(\F)\to D_k^\Omega(P,\F)$ is surjective.

\subsection{A fusion-stable lifting}

Let
$$
 \Z[1/p]=\left\{\frac{a}{p^n}\in\mathbb{Q}\mathrel{\big|}a\in\Z,\ n\geq 0\right\}
$$
be the localisation of $\Z$ obtained by inverting $p$. For an abelian group $A$, write
$$
 A[1/p]=\Z[1/p]\otimes_\Z A.
$$

\begin{lemma}\label{lem:p-local}
Assume that $\F$ is saturated. With the notation of Proposition \ref{prop:BY}, the homomorphism
$$
 \Psi_P:C(\F)_{(p)}\longrightarrow D_k^\Omega(P,\F)_{(p)}
$$
is surjective.
\end{lemma}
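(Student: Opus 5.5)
This is a direct application of Proposition \ref{prop:stable-elements} to the Bouc--Yal\c{c}\i n sequence \eqref{eq:BY}. Since $\F$ is saturated and
$$
 0\longrightarrow C_b\longrightarrow C\stackrel{\Psi}{\longrightarrow}D_k^\Omega\longrightarrow 0
$$
is an exact sequence of $p$-biset functors by Proposition \ref{prop:BY}, Proposition \ref{prop:stable-elements} yields an exact sequence
$$
 0\longrightarrow C_b(\F)_{(p)}\longrightarrow C(\F)_{(p)}\longrightarrow D_k^\Omega(\F)_{(p)}\longrightarrow 0 .
$$
The third map is the map induced by $\Psi_P$. Indeed, $\Psi$ is natural, so $\Psi_P$ commutes with $\Res_Q^P$ and with $\varphi^*$. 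Hence $\Psi_P$ carries $C(\F)$ into $D_k^\Omega(\F)$, and the map in the localised sequence is $\Z_{(p)}\otimes\Psi_P$ restricted to the stable parts. Surjectivity of this map is exactly the exactness at the right end.

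It remains to identify $D_k^\Omega(\F)$, the stable elements of the functor $D_k^\Omega$ in the sense of Definition \ref{def:stable-elements}, with $D_k^\Omega(P,\F)=D_k^\Omega(P)\cap D_k(P,\F)$. The paper asserts this just after defining $D_k^\Omega(P,\F)$. To justify it, I would use Definition \ref{def:fusion-stability} together with \cite[Proposition 3.4]{LM09}. For a capped endopermutation module $V$ with cap $V_0$, the condition $\Res^P_Q[V]=\varphi^*[V]$ in $D_k(Q)$ says that the caps of $\Res^P_Q V$ and ${}_\varphi V$ coincide. That is precisely the $\F$-stability condition on $V_0$, because the indecomposable summands with vertex $Q$ are the caps. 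This identification is the only potential subtlety. Everything else is formal.

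One small point concerns localisation. The group $D_k^\Omega(P)$ is finitely generated abelian, and $\Z_{(p)}$ is flat over $\Z$. Localisation therefore commutes with taking the subgroup of stable elements, which is a kernel of finitely many homomorphisms, so $D_k^\Omega(\F)_{(p)}$ is unambiguous. With these observations, the surjectivity of $\Psi_P\colon C(\F)_{(p)}\to D_k^\Omega(P,\F)_{(p)}$ follows immediately.
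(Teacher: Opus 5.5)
Your proposal is correct and follows the paper's proof: apply Proposition \ref{prop:stable-elements} to the Bouc--Yal\c{c}\i n sequence \eqref{eq:BY}, and read off surjectivity of $\Psi_P$ from exactness at the right end. Your extra remarks are accurate justifications of points the paper takes for granted, namely the identification of the stable elements of $D_k^\Omega$ with $D_k^\Omega(P,\F)$ via caps, and the fact that localisation commutes with stable elements by flatness of $\Z_{(p)}$.
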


\begin{proof}
Apply Proposition \ref{prop:stable-elements} to the exact sequence \eqref{eq:BY}. The resulting sequence on $\F$-stable parts is exact after localisation at $p$, and hence its last map $\Psi_P:C(\F)_{(p)}\to D_k^\Omega(P,\F)_{(p)}$ is surjective.
\end{proof}

We next prove the corresponding statement after inverting $p$. Let $C_{\rm lin}(P)$ be the subgroup of $C(P)$ consisting of the functions which satisfy \eqref{eq:BS-linear} whenever $S\trianglelefteq T\leq P$ and $T/S\cong C_p\times C_p$. It follows directly from the Borel--Smith conditions that
\begin{equation}\label{eq:Cb-Cl}
 \begin{aligned}
 C_b(P)[1/2]&=C_{\rm lin}(P)[1/2] &&\text{if }p=2,\\
 2C_{\rm lin}(P)&\subseteq C_b(P) &&\text{if }p\text{ is odd}.
 \end{aligned}
\end{equation}
The following elementary projection will be useful.

\begin{lemma}\label{lem:projection}
There is a homomorphism
$$
 \pi_P:C(P)[1/p]\longrightarrow C_{\rm lin}(P)[1/p]
$$
with the following properties:
\begin{enumerate}
\item $\pi_P$ is the identity on $C_{\rm lin}(P)[1/p]$;
\item the homomorphisms $\pi_P$ commute with restriction and transport by injective group homomorphisms.
\end{enumerate}
\end{lemma}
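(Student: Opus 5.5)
The plan is to write down an explicit formula of "character" type. The guiding fact is that, rationally, functions satisfying \eqref{eq:BS-linear} behave like dimension functions of virtual real representations. Such a function is therefore determined by its values on cyclic subgroups, and from those values one rebuilds it on all subgroups by averaging. Doing this carefully, so that only powers of $p$ appear in denominators, yields a natural projection.

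\textbf{Step 1 (definition).} For $f\in C(P)[1/p]$ and a cyclic subgroup $Z\leq P$, set $\phi_f(1)=f(1)$. For $Z\neq 1$, let $Z'$ be the unique subgroup of index $p$ in $Z$ and set
$$\phi_f(Z)=|Z|f(Z)-|Z'|f(Z').$$
Then for every $H\leq P$ define
$$\pi_P(f)(H)=\frac{1}{|H|}\sum_{Z\leq H,\ Z\text{ cyclic}}\phi_f(Z).$$
Only $|H|$, a power of $p$, is inverted. The function is constant on $P$-conjugacy classes, so $\pi_P(f)\in C(P)[1/p]$. Since the subgroups of a cyclic $p$-group form a chain, the sum telescopes when $H$ is cyclic. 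Hence $\pi_P(f)(Z)=f(Z)$ for all cyclic $Z$.

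\textbf{Step 2 (image lies in $C_{\rm lin}$).} Define a class function on elements by $\chi(g)=\phi_f(\langle g\rangle)/n(\langle g\rangle)$, where $n(Z)$ is the number of generators of $Z$. This lives over $\mathbb{Q}$, and $(p-1)$ in the denominator is harmless at this stage. Then
$$\pi_P(f)(H)=\frac{1}{|H|}\sum_{h\in H}\chi(h).$$
The function $\chi$ is constant on sets of generators of cyclic subgroups, so by Artin's induction theorem it lies in the $\mathbb{Q}$-span of rational (hence real) characters. Therefore $\pi_P(f)$ is a $\mathbb{Q}$-linear combination of dimension functions $H\mapsto\dim V^H$ of real representations. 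Such dimension functions satisfy \eqref{eq:BS-linear}, since they are Borel--Smith, and the condition is linear. So $\pi_P(f)$ satisfies \eqref{eq:BS-linear} rationally. Its values lie in $\Z[1/p]$ by Step 1, so $\pi_P(f)\in C_{\rm lin}(P)[1/p]$.

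\textbf{Step 3 (identity on $C_{\rm lin}$).} Let $g\in C_{\rm lin}(P)[1/p]$. By Step 2, $g-\pi_P(g)$ lies in $C_{\rm lin}(P)[1/p]$, and by Step 1 it vanishes on cyclic subgroups. It then suffices to show that an element of $C_{\rm lin}(P)[1/p]$ vanishing on cyclic subgroups is zero. I would prove this by induction on $|T|$. A noncyclic $p$-group $T$ has a normal subgroup $S$ with $T/S\cong C_p\times C_p$. There are $p+1$ intermediate subgroups $U$, so \eqref{eq:BS-linear} gives
$$p\,f(T)=\sum_{S<U<T}f(U)-f(S).$$
All terms on the right vanish by induction, and $p$ is invertible, so $f(T)=0$.

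\textbf{Step 4 (naturality).} For an injective homomorphism $\varphi:Q\to P$ and $R\leq Q$, both $\pi_Q(\varphi^*f)(R)$ and $\pi_P(f)(\varphi(R))$ are computed from the values of $f$ on the cyclic subgroups of $\varphi(R)$. The formula is visibly invariant under the isomorphism $R\cong\varphi(R)$, which preserves orders, cyclicity and index-$p$ subgroups. Hence $\pi_Q\circ\varphi^*=\varphi^*\circ\pi_P$, and restriction is the special case of an inclusion.

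The main obstacle is Step 2, namely justifying that $\pi_P(f)$ satisfies the linear condition. If one wants to avoid invoking Artin's theorem together with the Borel--Smith property of real dimension functions, the fallback is a direct computation. For $S\trianglelefteq T$ with $T/S\cong C_p\times C_p$, every cyclic subgroup of $T$ not contained in $S$ lies in exactly one intermediate $U$. Counting cyclic subgroups in $S$, in the $U$'s and in $T$ should then verify $p\,\pi(f)(T)=\sum_U\pi(f)(U)-\pi(f)(S)$ directly from the averaging formula.
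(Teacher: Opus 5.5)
Your proposal is correct, and Steps 1, 3 and 4 coincide with the paper's proof. The paper also uses formula \eqref{eq:projection} and rewrites $\pi_P(f)(H)$ as the average over $H$ of an element function; your $\phi_f(\langle g\rangle)/n(\langle g\rangle)$ is exactly the paper's $\chi_u$. It uses the same induction through a quotient $T/S\cong C_p\times C_p$ to show that elements of $C_{\rm lin}(P)[1/p]$ are determined by their values on cyclic subgroups, and the same naturality observation.

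The only real difference is Step 2. The paper proves \eqref{eq:BS-linear} by precisely the count you offer as a fallback, carried out with elements rather than cyclic subgroups. Each element of $T\setminus S$ lies in exactly one intermediate $U_i$, and each element of $S$ lies in all of them, which gives $\sum_i f(U_i)=f(S)+p f(T)$. That identity holds for an arbitrary function on the elements of $P$, so no representation theory is needed and the argument is self-contained.

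Your primary route is more conceptual: it explains why the averaging formula lands in $C_{\rm lin}$, since it produces $\mathbb{Q}$-combinations of dimension functions. However, it imports two external facts. First, a $\mathbb{Q}$-valued class function depending only on the conjugacy class of $\langle g\rangle$ is a $\mathbb{Q}$-combination of rational characters (equivalently of the permutation characters of $P/C$, $C$ cyclic, by triangularity of marks on cyclic subgroups). This is slightly different from Artin's induction theorem as usually stated, which concerns characters. Second, dimension functions of real representations satisfy the linear Borel--Smith relation. Both facts are true, so Step 2 is valid, just heavier than necessary.
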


\begin{proof}
For a nontrivial cyclic subgroup $C$ of $P$, denote by $C^-$ its unique subgroup of index $p$.  For $u\in C(P)[1/p]$ and $H\leq P$, set
\begin{equation}\label{eq:projection}
 (\pi_Pu)(H)=\frac{1}{|H|}\left(
 u(1)+\sum_{\substack{1<C\leq H\\ C\text{ cyclic}}}
 \big(|C|u(C)-|C^-|u(C^-)\big)\right).
\end{equation}
The sum in \eqref{eq:projection} is over all cyclic subgroups of $H$, not only over their conjugacy classes.  Since $|H|$ is a power of $p$, the right side belongs to $\Z[1/p]$. We verify first that $\pi_Pu$ satisfies \eqref{eq:BS-linear}.  Define a rational-valued function $\chi_u$ on the elements of $P$ by
$$
 \chi_u(1)=u(1)~~~{\rm and}~~~
 \chi_u(x)=\frac{p\,u(\langle x\rangle)
 -u(\langle x^p\rangle)}{p-1}\quad (x\neq 1).
$$
By grouping the elements of $H$ according to the cyclic subgroups which they generate, we obtain
$$
 (\pi_Pu)(H)=\frac{1}{|H|}\sum_{x\in H}\chi_u(x).
$$
Indeed, for each nontrivial cyclic subgroup $C\leq H$, the elements $x\in H$ satisfying $\langle x\rangle=C$ are precisely the generators of $C$, of which there are $|C|-|C^-|=(p-1)|C^-|$. For every such $x$, we have $\langle x^p\rangle=C^-$. Therefore,
$$
 \begin{aligned}
 \sum_{x\in H}\chi_u(x)
 &=u(1)+\sum_{\substack{1<C\leq H\\C\text{ cyclic}}}
 (|C|-|C^-|)\frac{p\,u(C)-u(C^-)}{p-1}\\
 &=u(1)+\sum_{\substack{1<C\leq H\\C\text{ cyclic}}}
 \bigl(|C|u(C)-|C^-|u(C^-)\bigr),
 \end{aligned}
$$
which is $|H|(\pi_Pu)(H)$ by \eqref{eq:projection}. To verify \eqref{eq:BS-linear}, suppose that $S\trianglelefteq T\leq P$ and $T/S\cong C_p\times C_p$, and let $S<U_0,\ldots,U_p<T$ be the intermediate subgroups.  Every element of $T\setminus S$ belongs to exactly one $U_i$, while every element of $S$ belongs to all the $U_i$.  Consequently,
$$
 \sum_{i=0}^{p}\sum_{x\in U_i}\chi_u(x)
 =p\sum_{x\in S}\chi_u(x)+\sum_{x\in T}\chi_u(x).
$$
Let $f=\pi_Pu$. Since $|U_i|=p|S|$ for every $i$ and $|T|=p^2|S|$, dividing the preceding equality by $p|S|$, we obtain
$$
 \begin{aligned}
 \sum_{i=0}^{p}f(U_i)
 &=\sum_{i=0}^{p}\frac{1}{|U_i|}\sum_{x\in U_i}\chi_u(x)\\
 &=\frac{1}{|S|}\sum_{x\in S}\chi_u(x)+\frac{p}{|T|}\sum_{x\in T}\chi_u(x)\\
 &=f(S)+pf(T).
 \end{aligned}
$$
Equivalently,
$$
 f(S)-f(T)=\sum_{i=0}^{p}\bigl(f(U_i)-f(T)\bigr),
$$
which is precisely \eqref{eq:BS-linear}. Hence $\pi_Pu\in C_{\rm lin}(P)[1/p]$.

If $H$ is cyclic, the sum in \eqref{eq:projection} telescopes, and hence $(\pi_Pu)(H)=u(H)$.  On the other hand, a function satisfying \eqref{eq:BS-linear} is determined by its values on the cyclic subgroups. Indeed, if $H$ is noncyclic, then $H$ has a normal subgroup $S$ such that $H/S\cong C_p\times C_p$, and \eqref{eq:BS-linear} determines the value at $H$ from the values at proper subgroups of $H$.  Induction on $|H|$ now shows that $\pi_P$ is the identity on $C_{\rm lin}(P)[1/p]$. Finally, formula \eqref{eq:projection} is unchanged under an injective group homomorphism, and hence the homomorphisms $\pi_P$ have the property asserted in (ii).
\end{proof}

\begin{lemma}\label{lem:away-p}
The homomorphism
$$
 \Psi_P:C(\F)[1/p]\longrightarrow D_k^\Omega(P,\F)[1/p]
$$
is surjective.
\end{lemma}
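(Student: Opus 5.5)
The idea is to start from an arbitrary (non-stable) lift and correct it using the projection $\pi_P$ of Lemma \ref{lem:projection}; saturation is not needed. Extend $\Psi_P$ $\Z[1/p]$-linearly to $C(P)[1/p]\to D_k^\Omega(P)[1/p]$. Since $D_k^\Omega(P,\F)[1/p]$ is spanned over $\Z[1/p]$ by the elements of $D_k^\Omega(P,\F)$, it suffices to treat one stable class $a\in D_k^\Omega(P,\F)$. By Proposition \ref{prop:BY}, $\Psi_P$ is surjective, so there is some $f\in C(P)$ with $\Psi_P(f)=a$. In general $f$ is not $\F$-stable, and we correct it by setting
$$
 h=f-\pi_P(f)\in C(P)[1/p].
$$
We then show that $h$ is $\F$-stable and that $\Psi_P(h)=a$ in $D_k^\Omega(P)[1/p]$.

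\emph{Stability of $h$.} Let $Q\leq P$ and $\varphi\in\Hom_\F(Q,P)$, and put $d=\Res^P_Q(f)-\varphi^*f\in C(Q)$. Because $\Psi$ is a morphism of $p$-biset functors and $a$ is $\F$-stable,
$$
 \Psi_Q(d)=\Res^P_Q(a)-\varphi^*(a)=0.
$$
Exactness of \eqref{eq:BY} at $Q$ then gives $d\in C_b(Q)\subseteq C_{\rm lin}(Q)$. By Lemma \ref{lem:projection}(ii), $\pi$ commutes with $\Res^P_Q$ and with $\varphi^*$, the latter being restriction to $\varphi(Q)$ followed by transport along an isomorphism. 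Hence
$$
 \Res^P_Q(h)-\varphi^*h=d-\pi_Q(d),
$$
and this vanishes by Lemma \ref{lem:projection}(i). So $h\in C(\F)[1/p]$.

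\emph{Image of $h$.} Since $\Psi_P(h)=a-\Psi_P(\pi_Pf)$, we must show that $\Psi_P$ kills $\pi_Pf\in C_{\rm lin}(P)[1/p]$. Choose $n$ with $p^n\pi_Pf\in C_{\rm lin}(P)$.
\begin{enumerate}
\item If $p=2$, then \eqref{eq:Cb-Cl} gives $C_{\rm lin}(P)[1/2]=C_b(P)[1/2]$. Since $C_b=\ker\Psi$, we get $\Psi_P(\pi_Pf)=0$.
\item If $p$ is odd, \eqref{eq:Cb-Cl} gives $2p^n\pi_Pf\in C_b(P)$, so $2\,\Psi_P(\pi_Pf)=0$ in $D_k^\Omega(P)[1/p]$. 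To conclude, we use that for $p$ odd the Dade group $D_k(P)$ is torsion-free (Bouc--Thévenaz; see \cite{Th07}). Its subgroup $D_k^\Omega(P)$, and hence $D_k^\Omega(P)[1/p]$, is then torsion-free, so again $\Psi_P(\pi_Pf)=0$.
\end{enumerate}
In both cases $\Psi_P(h)=a$, which proves surjectivity.

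The main obstacle is this last step for odd $p$. The projection $\pi_P$ only lands in $C_{\rm lin}$, not in $C_b$, so the factor $2$ cannot be removed by inverting $p$. We dispose of it by invoking torsion-freeness of the Dade group for odd $p$, an external input. Failing that, one would need a finer, restriction-compatible projection onto $C_b(P)[1/p]$ that also respects the parity condition of Definition \ref{def:borel-smith}(1).
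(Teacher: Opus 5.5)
Your case $p=2$ and your stability computation $\Res^P_Q(h)-\varphi^*h=d-\pi_Q(d)=0$ are correct. The odd case, however, rests on a false claim: the Dade group is \emph{not} torsion-free for odd $p$. For $P$ cyclic of order $p^n\geq 3$ one has $\Omega^2(k)\cong k$, so $[\Omega_P(k)]$ has order $2$. In fact $D_k(C_{p^n})\cong(\Z/2\Z)^n$, as the $\Z/2\Z$-summands in Lemma \ref{lem:abelian-normal-form} show, and inverting $p$ does not kill this $2$-torsion. This is not only a wrong citation; the correction $h=f-\pi_P(f)$ itself fails. Take $P=C_p$ with $p$ odd, any fusion system $\F$ on $P$, $a=[\Omega_P(k)]\in D_k^\Omega(P,\F)$ and $f=\omega_{P/1}$. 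Since $P$ has no section isomorphic to $C_p\times C_p$, we have $C_{\rm lin}(P)=C(P)$, so $\pi_P$ is the identity and $h=0$. But $a\neq 0$ in $D_k^\Omega(P)[1/p]\cong\Z/2\Z$. In general $\Psi_P(\pi_P f)$ can be a nonzero element of order $2$, and you have no control over it.

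The missing idea is to strip off the parity of $f$ using an $\F$-stable function \emph{before} projecting, which is what the paper does. The defect $d=\Res^P_Q(f)-\varphi^*f$ lies in $C_b(Q)$ and vanishes at $1$. Applying Borel--Smith condition (1) along a subnormal series from $1$ to $R$ with factors of order $p$ shows that $d(R)$ is even for every $R\leq Q$. Hence the $\{0,1\}$-valued function $\epsilon$ with $\epsilon\equiv f \pmod 2$ is constant on conjugacy classes and satisfies $\epsilon(R)=\epsilon(\varphi(R))$, so $\epsilon\in C(\F)$; moreover $f-\epsilon\in 2C(P)$. Set $b=\pi_P(f-\epsilon)$. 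Then $b\in 2C_{\rm lin}(P)[1/p]\subseteq C_b(P)[1/p]$ by \eqref{eq:Cb-Cl}, so $\Psi_P(b)=0$ with no torsion argument needed. Since $\epsilon$ is stable, your own computation gives $\Res^P_Q(b)-\varphi^*b=\pi_Q(d)=d$. Thus $f-b=h+\pi_P(\epsilon)$ is an $\F$-stable lift of $a$: your $h$ must be corrected by the stable element $\pi_P(\epsilon)$.
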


\begin{proof}
Let $d\in D_k^\Omega(P,\F)$ and choose $f\in C(P)$ such that $\Psi_P(f)=d$.  For $Q\leq P$ and $\varphi\in\Hom_\F(Q,P)$, set
$$
 \delta_\varphi(f)=\Res_Q^P(f)-\varphi^*f.
$$
Since $d$ is $\F$-stable, Proposition \ref{prop:BY} implies
\begin{equation}\label{eq:defect-Cb}
 \delta_\varphi(f)\in C_b(Q).
\end{equation}
Notice also that $\delta_\varphi(f)(1)=0$. Let $\pi_P$ be the homomorphism defined in Lemma \ref{lem:projection}. Suppose first that $p=2$.  Define
$$
 b=\pi_P(f)\in C_{\rm lin}(P)[1/2]=C_b(P)[1/2].
$$
By property (ii) of Lemma \ref{lem:projection}, we have
$$
 \begin{aligned}
 \delta_\varphi(b)
 &=\Res_Q^P\bigl(\pi_P(f)\bigr)-\varphi^*\bigl(\pi_P(f)\bigr)\\
 &=\pi_Q\bigl(\Res_Q^P(f)\bigr)-\pi_Q(\varphi^*f)\\
 &=\pi_Q\bigl(\delta_\varphi(f)\bigr).
 \end{aligned}
$$
On the other hand, \eqref{eq:defect-Cb} and \eqref{eq:Cb-Cl} imply
$$
 \delta_\varphi(f)\in C_b(Q)\subseteq C_b(Q)[1/2]=C_{\rm lin}(Q)[1/2].
$$
Therefore, by property (i) of Lemma \ref{lem:projection}, we have
$$
 \pi_Q\bigl(\delta_\varphi(f)\bigr)=\delta_\varphi(f).
$$
Consequently, $\delta_\varphi(f-b)=0$. Since $Q$ and $\varphi$ were arbitrary, $f-b$ is $\F$-stable. Moreover, $b\in C_b(P)[1/2]$, so Proposition \ref{prop:BY} implies that $\Psi_P(b)=0$. Hence $\Psi_P(f-b)=d$ in $D_k^\Omega(P)[1/2]$.

Now suppose that $p$ is odd.  We claim that the reduction of $f$ modulo $2$ is $\F$-stable.  Indeed, let $R\leq Q$.  Choose a subnormal series from $1$ to $R$ whose factors have order $p$.  The first Borel--Smith condition, applied to $\delta_\varphi(f)$, and the equality $\delta_\varphi(f)(1)=0$ show that $\delta_\varphi(f)(R)$ is even.  Define $\epsilon\in C(P)$ by requiring
$$
 \epsilon(R)\in\{0,1\}\qquad\text{and}\qquad
 \epsilon(R)\equiv f(R)\pmod 2
$$
for all $R\leq P$.  Then $\epsilon\in C(\F)$ and $f-\epsilon\in 2C(P)$.  Set
$$
 b=\pi_P(f-\epsilon).
$$
Since $f-\epsilon\in 2C(P)$ and $\pi_P$ is a homomorphism with image in $C_{\rm lin}(P)[1/p]$, we have $b\in 2C_{\rm lin}(P)[1/p]$. Hence, by \eqref{eq:Cb-Cl}, we have
$$
 b\in C_b(P)[1/p].
$$
Since $\epsilon\in C(\F)$, we have $\delta_\varphi(\epsilon)=0$. Therefore, by property (ii) of Lemma \ref{lem:projection}, we have
$$
 \begin{aligned}
 \delta_\varphi(b)
 &=\delta_\varphi\bigl(\pi_P(f-\epsilon)\bigr)\\
 &=\pi_Q\bigl(\delta_\varphi(f-\epsilon)\bigr)\\
 &=\pi_Q\bigl(\delta_\varphi(f)-\delta_\varphi(\epsilon)\bigr)\\
 &=\pi_Q\bigl(\delta_\varphi(f)\bigr).
 \end{aligned}
$$
By \eqref{eq:defect-Cb}, $\delta_\varphi(f)$ belongs to $C_b(Q)$. Every Borel--Smith function satisfies \eqref{eq:BS-linear}, so $C_b(Q)\subseteq C_{\rm lin}(Q)$. Property (i) of Lemma \ref{lem:projection} now implies that
$$
 \pi_Q\bigl(\delta_\varphi(f)\bigr)=\delta_\varphi(f).
$$
Consequently, $\delta_\varphi(f-b)=0$. Since $Q$ and $\varphi$ were arbitrary, $f-b$ is $\F$-stable. Moreover, $b\in C_b(P)[1/p]$, so Proposition \ref{prop:BY} implies that $\Psi_P(b)=0$. Hence $\Psi_P(f-b)=d$ in $D_k^\Omega(P)[1/p]$.
\end{proof}

\begin{proposition}\label{prop:stable-lift}
Assume that $\F$ is saturated. Then the homomorphism
$$
 \Psi_P:C(\F)\longrightarrow D_k^\Omega(P,\F)
$$
is surjective.
\end{proposition}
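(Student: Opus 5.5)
The plan is to combine the two localized surjectivity statements, Lemma \ref{lem:p-local} and Lemma \ref{lem:away-p}, by a standard local-global argument for abelian groups. Let $A=C(\F)$, $B=D_k^\Omega(P,\F)$, and let $\Psi=\Psi_P|_A:A\to B$. This restriction is well defined because $\Psi$ is a morphism of $p$-biset functors, as noted after Proposition \ref{prop:BY}. Set $N=B/\Psi(A)$, which is a finitely generated abelian group since $D_k^\Omega(P)$ is finitely generated (it is a quotient of $C(P)$, which is free of finite rank by Lemma \ref{lem:omega-basis}). The goal is to show $N=0$.

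Localization is exact, because $\Z_{(p)}$ and $\Z[1/p]$ are flat over $\Z$. Therefore $N_{(p)}\cong B_{(p)}/\Psi(A)_{(p)}$ and $N[1/p]\cong B[1/p]/\Psi(A)[1/p]$. Lemma \ref{lem:p-local}, which uses saturation, gives $N_{(p)}=0$, and Lemma \ref{lem:away-p} gives $N[1/p]=0$.

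Now take $x\in N$. Since $N_{(p)}=0$, there is an integer $m$ with $p\nmid m$ and $mx=0$. Since $N[1/p]=0$, there is some $n\ge 0$ with $p^nx=0$. Because $\gcd(m,p^n)=1$, Bézout's identity gives $x=0$. Hence $N=0$, and $\Psi_P:C(\F)\to D_k^\Omega(P,\F)$ is surjective.

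The only point needing care is making sure the maps in the two lemmas really are the localizations of this single map $\Psi:A\to B$, with the same $\F$-stable subgroups. Lemma \ref{lem:away-p} is phrased in terms of an element $d\in D_k^\Omega(P,\F)$ with a preimage $f-b\in C(\F)[1/p]$, so it indeed shows that the image of $d$ lies in $\Psi(A)[1/p]$. Beyond this consistency check there is no real obstacle: the substantive work is already done in the two lemmas.
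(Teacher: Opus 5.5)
Your proof is correct and is essentially the paper's argument: you take the cokernel, use Lemmas \ref{lem:p-local} and \ref{lem:away-p} to show it vanishes both after localizing at $p$ and after inverting $p$, and conclude with B\'ezout's identity. Your added remarks on flatness and on matching the localized maps are fine; the finite generation of the cokernel is true but not needed.
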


\begin{proof}
Let $A$ be its cokernel. By Lemma \ref{lem:p-local}, we have $A_{(p)}=0$, while by Lemma \ref{lem:away-p}, we have $A[1/p]=0$. Hence every element of $A$ is annihilated both by an integer prime to $p$ and by a power of $p$. These two integers are coprime, and B\'ezout's identity implies that the element is zero. Thus $A=0$.
\end{proof}

\begin{remark}\label{rem:no-canonicity}
{\rm No uniqueness is asserted in Proposition \ref{prop:stable-lift}.  Two $\F$-stable lifts of the same element of $D_k^\Omega(P,\F)$ may differ by an $\F$-stable Borel--Smith function.}
\end{remark}

\subsection{Construction of the resolution}

We now relate the functions considered above to endosplit $p$-permutation complexes. Let $X$ be a bounded complex of $p$-permutation $kP$-modules such that $X\otimes_k X^*$ is split and such that the unique nonzero homology module $M$ of $X$ is a capped endopermutation module. We do not require, for the moment, that this homology occurs in degree $0$. For $Q\leq P$, the Brauer construction $X(Q)$ has homology concentrated in exactly one degree. Indeed, since $X\otimes_k X^*$ is split, we have $X\otimes_k X^*\simeq\End_k(M)[0]$. Applying the Brauer construction and using its compatibility with tensor products and duals, we obtain $X(Q)\otimes_k X(Q)^*\simeq\End_k(M)(Q)[0]$. Since $M$ is capped, $\End_k(M)$ has the trivial module as a direct summand, and hence $\End_k(M)(Q)\neq0$. Thus $X(Q)$ is not contractible, while the K\"unneth formula shows that its homology is concentrated in at most one degree. Denote this unique degree by $h_X(Q)$. If $Q$ and $R$ are conjugate in $P$, then $X(Q)$ and $X(R)$ are isomorphic up to transport of structure, so $h_X(Q)=h_X(R)$. Therefore $h_X\in C(P)$.  We refer to $h_X$ as the {\it homological mark} of $X$.  We collect the properties needed below.

\begin{lemma}\label{lem:h-marks}
Let $X$ and $Y$ be bounded complexes of $p$-permutation $kP$-modules. Assume that $X\otimes_k X^*$ and $Y\otimes_k Y^*$ are split and that each of $X$ and $Y$ has a capped endopermutation $kP$-module as its unique nonzero homology module. The following hold:
\begin{enumerate}
\item $h_{X\otimes_k Y}=h_X+h_Y$ and $h_{X^*}=-h_X$.
\item If $Q\leq P$ and $\varphi:Q\to P$ is injective, then for $R\leq Q$,
 $$
  h_{\Res_Q^P(X)}(R)=h_X(R),~~~{\rm and}~~~
  h_{{}_\varphi X}(R)=h_X(\varphi(R)).
 $$
\item Suppose that $h_X=h_Y$.  Then the noncontractible indecomposable direct summands with vertex $P$ of $X$ and $Y$ have the same isomorphism class.
\end{enumerate}
\end{lemma}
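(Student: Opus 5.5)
The plan is to prove each part by computing Brauer constructions, using that for a complex $Z$ of $p$-permutation modules with $Z\otimes_k Z^*$ split, $Z(Q)$ has homology in the single degree $h_Z(Q)$.

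\emph{Part (i).} The Brauer construction commutes with tensor products and duals of $p$-permutation complexes. Hence
$$(X\otimes_k Y)(Q)\cong X(Q)\otimes_k Y(Q)\quad\text{and}\quad (X^*)(Q)\cong X(Q)^*.$$
Before the Künneth formula can be applied to $X\otimes_k Y$, the hypotheses must be checked:
\begin{itemize}
\item $(X\otimes_k Y)\otimes_k(X\otimes_k Y)^*\cong (X\otimes_k X^*)\otimes_k(Y\otimes_k Y^*)$ is split, as a tensor product of split complexes.
\item The homology of $X\otimes_k Y$ is $M\otimes_k N$ in a single degree. This is a capped endopermutation module, because the tensor product of two vertex-$P$ endopermutation caps has a vertex-$P$ summand; this is the standard fact that the Dade group is a group.
\end{itemize}
Over a field, the Künneth formula then gives $H_*(X(Q)\otimes_k Y(Q))=H_*(X(Q))\otimes_k H_*(Y(Q))$, which is nonzero and concentrated in degree $h_X(Q)+h_Y(Q)$. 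Dualising negates degrees, so $h_{X^*}=-h_X$.

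\emph{Part (ii).} For $R\leq Q$, the Brauer construction of $\Res^P_Q(X)$ at $R$ is literally $X(R)$. Similarly $({}_\varphi X)(R)$ is $X(\varphi(R))$, transported along the isomorphism $\varphi:R\to\varphi(R)$, and transport does not change degrees. Both formulas follow at once.

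\emph{Part (iii).} This is the substantive step. Write $X=X'\oplus X_0$, where $X'$ is the sum of the indecomposable summands with vertex $P$. Summands with smaller vertex vanish under $-(P)$, since each term of such a summand is a sum of $p$-permutation modules with vertex strictly smaller than $P$. Hence $X(P)=X'(P)$. The same argument gives $Y(P)=Y'(P)$. I will take as the noncontractible vertex-$P$ summand the unique indecomposable one, $X_P$. Uniqueness comes from the splitness of $X\otimes X^*$, as in Rickard's theory (cf. \cite[\S7]{Rickard}, \cite[\S7.11]{Lin18b}).

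A $p$-permutation indecomposable module with vertex $P$ is trivial, since $P$ is a $p$-group. So $X'$ is a complex of trivial $kP$-module summands, and the same holds for $Y'$. A noncontractible indecomposable summand with all terms having vertex $P$ is then homotopy equivalent to $k[n]$ for some $n$. Its Brauer construction at $P$ is $k[n]$, so $n=h_X(P)$.

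Thus $X_P\simeq k[h_X(P)]$ and $Y_P\simeq k[h_Y(P)]$. If $h_X=h_Y$, then $h_X(P)=h_Y(P)$, and the two summands are isomorphic modulo contractible summands. That is the required statement: the noncontractible indecomposable direct summands with vertex $P$ have the same isomorphism class.

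The main obstacle is making precise that a noncontractible indecomposable complex of $p$-permutation modules with all terms of vertex $P$ is $k$ in one degree. I will argue this through the equivalence between the homotopy category of such complexes and the homotopy category of complexes of $k$-vector spaces, given by the Brauer construction at $P$. Over a field every complex there is homotopy equivalent to its homology, and $X(P)$ has one-dimensional homology. One-dimensionality holds because $X(P)\otimes X(P)^*\simeq \End_k(M)(P)[0]$, and $\End_k(M)(P)$ is one-dimensional modulo the contribution of the non-cap summands, so the noncontractible part of $X(P)$ is $k[h_X(P)]$.
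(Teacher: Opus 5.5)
\textbf{Verdict.} Parts (i) and (ii) are fine and follow the paper: the K\"unneth formula, plus compatibility of the Brauer construction with tensor products, duals, restriction and transport. Part (iii), however, rests on a false step.

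\textbf{The false step.} From ``$X'$ is the sum of the indecomposable summands of $X$ with vertex $P$'' you conclude that $X'$ is a complex of trivial $kP$-modules. A vertex of an indecomposable complex is a minimal $Q$ such that the complex, as a complex, is a direct summand of $\operatorname{Ind}_Q^P\Res^P_Q$ of itself. Having vertex $P$ is a condition on the complex as a whole; it does not force the terms to have vertex $P$. For instance, take $Q<P$ with $|P:Q|\geq 3$ and the complex $C_{P/Q}=(k[P/Q]\to k)$ of Proposition~\ref{prop:relative-syzygy-resolution}.
\begin{itemize}
\item It is indecomposable, since both terms are indecomposable and the augmentation is nonzero.
\item It is noncontractible and has vertex $P$, since its degree-$0$ term is $k$.
\item Its degree-$1$ term has vertex $Q<P$.
\item Its homology $\Omega_{P/Q}(k)$ has dimension $|P:Q|-1\geq 2$, so it is not homotopy equivalent to any $k[n]$.
\end{itemize}
In general, by the vertex-preserving correspondence \cite[Theorem 7.11.2]{Lin18b}, the noncontractible vertex-$P$ summand of $X$ has the cap of $M$ as its homology. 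Your conclusion $X_P\simeq k[h_X(P)]$ would therefore force every such cap to be trivial. Your closing appeal to $\End_k(M)(P)$ does not rescue this: it only controls $X(P)$, that is, the single number $h_X(P)$.

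\textbf{Intermediate values of $h$ matter.} Your argument uses only $h_X(P)=h_Y(P)$, but the values at intermediate subgroups are essential. Let $P$ be cyclic of order $p^2$ with $p$ odd, and let $Z<P$ have order $p$. The complex $C_{P/1}\otimes_k C_{P/Z}^*$ has the same marks as $k[0]$ at $1$ and at $P$; they differ only at $Z$. Yet its vertex-$P$ summand resolves a cap of nonzero Dade class $[\Omega_P(k)]-[\Omega_{P/Z}(k)]$.

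\textbf{What is missing.} You need a detection theorem saying that the whole function $h$ determines the vertex-$P$ summand. The paper obtains this in two steps.
\begin{itemize}
\item It applies Miller's injectivity of the homological mark homomorphism \cite[Theorem 3.13]{Mil26}, with $V=\bigoplus_{R<P}k[P/R]$.
\item It observes that a noncontractible vertex-$P$ summand $X_0$ has $X_0(R)$ noncontractible for every $R\leq P$. Hence $h_{X_0}=h_X=h_Y=h_{Y_0}$, and so $X_0\cong Y_0$.
\end{itemize}
Without this input, or a proof of an equivalent statement, (iii) is not established.
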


\begin{proof}
The first two statements follow from the K\"unneth formula and from the fact that the Brauer construction commutes with tensor products of $p$-permutation modules. For (iii), the equality $h_X=h_Y$ implies that $h_X(1)=h_Y(1)$, so the unique nonzero homology modules of $X$ and $Y$ occur in the same degree. After applying the same shift to both complexes, we may assume that this degree is $0$, so that $X$ and $Y$ are endosplit $p$-permutation resolutions. Apply \cite[Theorem 3.13]{Mil26} with $V=\bigoplus_{R<P}k[P/R]$. Then the homological mark homomorphism is injective. Take noncontractible indecomposable summands $X_0$ and $Y_0$ with vertex $P$. Their Brauer constructions are noncontractible at every subgroup of $P$, and hence $h_{X_0}=h_X$ and $h_{Y_0}=h_Y$. Therefore, by the injectivity of the homological mark homomorphism, we have $X_0\cong Y_0$. The correspondence between indecomposable summands of the homology and noncontractible indecomposable summands of an endosplit resolution is vertex preserving; see \cite[Theorem 7.11.2]{Lin18b}. This proves (iii).
\end{proof}

\begin{proposition}\label{prop:relative-syzygy-resolution}
Let $S$ be a nonempty finite $P$-set, and let
$$
 C_S=\big(0\longrightarrow kS
 \stackrel{\varepsilon_S}{\longrightarrow}k\longrightarrow0\big),
$$
where $kS$ is in degree $1$, $k$ is in degree $0$, and $\varepsilon_S$ is the augmentation map. Then $C_S[-1]$ is an endosplit $p$-permutation resolution of $\Omega_S(k)$. In particular, if we let $X_Q=C_{P/Q}$ for any $Q<P$, then
\begin{equation}\label{eq:h-standard}
 h_{X_Q}=\omega_{P/Q}.
\end{equation}
\end{proposition}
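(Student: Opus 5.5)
The plan has three steps: compute the homology of $C_S$, show that $C_S\otimes_k C_S^*$ is split, and then read off the homological mark from the Brauer construction.

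\emph{Homology.} Since $S$ is nonempty, the augmentation $\varepsilon_S$ is surjective, so $C_S$ has homology only in degree $1$, namely $\ker\varepsilon_S=\Omega_S(k)$. Shifting, $C_S[-1]$ has homology $\Omega_S(k)$ concentrated in degree $0$. Both terms, $kS$ and $k$, are permutation modules, hence $p$-permutation.

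\emph{Splitness.} We need $C_S\otimes_k C_S^*$ to be split; the shift does not affect this. Write $Z=C_S\otimes_k C_S^*$. It is the three-term complex
$$kS\otimes kS^* \to kS\oplus kS^* \to k,$$
and the map $kS\to k$ already shows that the last differential is split surjective.

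The cleanest route is to use $C_S\simeq\Omega_S(k)[1]$ as complexes of $k$-modules. Then $Z$ is quasi-isomorphic to $\End_k(\Omega_S(k))[0]$, and its homology is in degree $0$ only. A bounded complex whose homology sits in one degree is split as a complex of $kP$-modules exactly when the boundary and cycle modules are direct summands. Here this reduces to the sequence
$$0\to \Omega_S(k)\otimes kS^* \to kS\otimes kS^*\to \cdots$$
splitting, i.e.\ to the degree-$(\pm1)$ kernels being summands. Concretely this comes down to $\Omega_S(k)\otimes\Omega_S(k)^*$ being a summand of $kS\otimes kS^*$ compatibly.

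That is the content of Alperin's argument \cite{Alp01} that $\Omega_S(k)$ is endopermutation: one shows that $kS\otimes\Omega_S(k)^*\to \Omega_S(k)\otimes\Omega_S(k)^*$-type maps split, using that $kS\otimes M\cong\bigoplus_{x\in S/P}\mathrm{Ind}_{P_x}^P\Res_{P_x}M$ and that $\Res_{P_x}\Omega_S(k)\cong k\oplus(\text{permutation})$, since $S$ has a $P_x$-fixed point. Rather than redo this, I would simply cite \cite[Theorem 14.3]{Th07} or \cite[\S7.11]{Lin18b}, where $C_S[-1]$ is exactly the standard endosplit resolution of a relative syzygy. I expect verifying splitness by hand to be the main obstacle, so citing is preferable.

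\emph{Homological mark.} For $Q\le P$ the Brauer construction gives
$$C_S(Q)=\bigl(kS^Q\to k\bigr),$$
again with the augmentation map. If $S^Q\ne\varnothing$ this map is surjective and its kernel is nonzero precisely when $|S^Q|>1$. If $|S^Q|=1$ the complex is contractible, but this cannot occur for a capped module, since the mark is well defined; indeed $|S^Q|\equiv|S^P|$ and the degree-$1$ term is nonzero. So the homology sits in degree $1$, and for the shifted complex $X_Q=C_{P/Q}$ (homology of $C_{P/Q}$ in degree $1$) we get $h_{X_Q}(R)=1$ when $(P/Q)^R\neq\varnothing$. If $S^Q=\varnothing$, the complex is $k$ in degree $0$, so the mark is $0$.

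Hence $h_{C_S}(R)$ is $1$ or $0$ according as $S^R$ is nonempty or empty, that is, $h_{C_S}=\omega_S$. Taking $S=P/Q$ gives \eqref{eq:h-standard}. Note that the homological mark is computed for $C_S$ itself, not for the shift $C_S[-1]$, consistent with the convention that the mark does not require homology in degree $0$.
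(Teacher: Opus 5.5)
\textbf{Gap: the splitness of $C_S\otimes_k C_S^*$ is not established.} This is the only substantive point of the proposition, and the sketch you give for it is wrong. Your treatment of the homology is fine.
\begin{itemize}
\item \emph{Wrong complex.} Since $C_S^*$ has $k$ in degree $0$ and $(kS)^*$ in degree $-1$, the complex $C_S\otimes_k C_S^*$ has terms $kS$, $(kS\otimes_k kS^*)\oplus k$, $kS^*$ in degrees $1,0,-1$. The complex you wrote has the shape of $C_S\otimes_k C_S$ instead.
\item \emph{The augmentation is not split.} Your claim that the augmentation makes a differential split surjective fails exactly in the case of interest. A splitting of $\varepsilon_S$ needs a $P$-fixed vector of $kS$ with augmentation $1$, which exists only if $S^P\neq\varnothing$. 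If $\varepsilon_S$ did split, $\Omega_S(k)$ would be a summand of $kS$, hence $p$-permutation.
\item \emph{The summand reduction fails.} Reducing to $\Omega_S(k)\otimes_k\Omega_S(k)^*$ being a summand of $kS\otimes_k kS^*$ does not work when $S^P=\varnothing$. In that case $k$ is a summand of the former, because $\dim\Omega_S(k)=|S|-1$ is prime to $p$. It is not a summand of $kS\otimes_k kS^*\cong k[S\times S]$, since $S\times S$ has no $P$-fixed point.
\item \emph{The citation does not cover this complex.} As stated, \cite[Theorem 14.3]{Th07} only asserts that \emph{some} endosplit $p$-permutation resolution exists. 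The proposition, and its later use for $X(f)$ and $h_{X_Q}$, is about this particular complex.
\end{itemize}

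\textbf{How the paper fills it.} The step is elementary once the complex is written correctly. Identify $(kS)^*$ with $kS$ via the dual basis and put $\sigma=\sum_{x\in S}x$. Then the differentials are $d_1(x)=(x\otimes\sigma,-1)$, $d_0(x\otimes y,0)=y$ and $d_0(0,1)=\sigma$. The $P$-equivariant maps $r(x\otimes y,0)=\delta_{x,y}x$, $r(0,1)=0$ and $s(x)=(x\otimes x,0)$ satisfy $rd_1=\operatorname{id}_{kS}$ and $d_0s=\operatorname{id}_{kS}$. So $d_1$ is split injective and $d_0$ is split surjective, and the three-term complex is split.

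\textbf{A smaller problem in the mark computation.} Your general conclusion $h_{C_S}=\omega_S$ is false when $|S^P|=1$; for $S$ a point, $C_S$ is contractible and there is no mark. Your justification, ``this cannot occur for a capped module, since the mark is well defined'', is circular. What is needed is that $(P/Q)^P=\varnothing$ for $Q<P$, so $|(P/Q)^R|\equiv|P:Q|\equiv 0\pmod p$. A nonempty fixed-point set therefore has at least $p\geq 2$ elements, and $X_Q(R)$ has nonzero homology in degree $1$.
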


\begin{proof}
Since $S$ is nonempty, $\varepsilon_S$ is surjective, and hence the only nonzero homology of $C_S$ is $\Omega_S(k)$ in degree $1$. It remains to prove that $C_S\otimes_k C_S^*$ is split. Let $\sigma=\sum_{x\in S}x\in kS$. After identifying $(kS)^*$ with $kS$ via the dual of the permutation basis, we may write $C_S\otimes_k C_S^*$ as
$$
 0\longrightarrow kS\stackrel{d_1}{\longrightarrow}
 (kS\otimes_k kS)\oplus k
 \stackrel{d_0}{\longrightarrow}kS\longrightarrow0,
$$
where for any $x,y\in S$,
$$
 d_1(x)=(x\otimes\sigma,-1),\qquad
 d_0(x\otimes y,0)=y,\qquad d_0(0,1)=\sigma.
$$
Define $kP$-homomorphisms
$$
 r:(kS\otimes_k kS)\oplus k\longrightarrow kS,
 \qquad r(x\otimes y,0)=\delta_{x,y}x,\quad r(0,1)=0,
$$
where $\delta_{x,y}$ denotes the Kronecker delta, and
$$
 s:kS\longrightarrow(kS\otimes_k kS)\oplus k,
 \qquad s(x)=(x\otimes x,0).
$$
Both maps are $P$-equivariant, and $r d_1=\operatorname{id}_{kS}$ and $d_0s=\operatorname{id}_{kS}$. Thus $d_1$ is split injective and $d_0$ is split surjective, so $C_S\otimes_k C_S^*$ is split. Since $(C_S[-1])\otimes_k(C_S[-1])^*\cong C_S\otimes_k C_S^*$, the shifted complex is an endosplit $p$-permutation resolution of $\Omega_S(k)$.

Now let $Q<P$ and set $X_Q=C_{P/Q}$. If $(P/Q)^R$ is empty, then $X_Q(R)$ is the complex $0\to k$ and has homology in degree $0$. If $(P/Q)^R$ is nonempty, then it has at least $p$ elements: if $R=1$, then $(P/Q)^R=P/Q$ and $|P/Q|=|P:Q|$ is divisible by $p$ because $Q<P$; if $R\neq1$, then orbit counting for the action of the $p$-group $R$ on $P/Q$ shows that $|(P/Q)^R|\equiv |P/Q|\equiv0\pmod p$. Thus $X_Q(R)$ has nonzero homology in degree $1$, and hence $h_{X_Q}=\omega_{P/Q}$.
\end{proof}

\begin{proposition}\label{prop:realise-f}
Let $f\in C(P)$ and assume that $f(1)=0$. There is an endosplit $p$-permutation complex $X(f)$ of $kP$-modules such that
$$
 h_{X(f)}=f
$$
and such that $H_0(X(f))$ is an endopermutation $kP$-module whose class in $D_k^\Omega(P)$ is $\Psi_P(f)$.
\end{proposition}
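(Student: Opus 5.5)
Since $f(1)=0$ and the functions $\omega_{P/R}$ form a $\Z$-basis of $C(P)$ (Lemma \ref{lem:omega-basis}), I write $f=\sum_{R\in\mathscr{S}} n_R\,\omega_{P/R}$ with $n_R\in\Z$. The term with $R=P$ is the constant function $\omega_{P/P}=1$. This suggests first expressing $f$ in a form adapted to the standard complexes of Proposition \ref{prop:relative-syzygy-resolution}. Note $\omega_{P/R}(1)=1$ for every $R$, so $f(1)=\sum_R n_R=0$, which gives $n_P=-\sum_{R<P}n_R$. Hence
$$
 f=\sum_{R<P} n_R\,\big(\omega_{P/R}-\omega_{P/P}\big).
$$
The shift $k[-1]$ (the trivial module placed in degree $-1$, a trivially endosplit $p$-permutation complex) has homological mark equal to the constant $-1=-\omega_{P/P}$. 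By Proposition \ref{prop:relative-syzygy-resolution}, $C_{P/R}[-1]$ is an endosplit $p$-permutation resolution of $\Omega_{P/R}(k)$ in degree $0$, and its homological mark is $\omega_{P/R}-1$. So each summand of $f$ is realised by an endosplit resolution in degree $0$.

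I then set
$$
 X(f)=\bigotimes_{R<P,\ n_R>0}\big(C_{P/R}[-1]\big)^{\otimes n_R}\otimes_k\bigotimes_{R<P,\ n_R<0}\big((C_{P/R}[-1])^*\big)^{\otimes (-n_R)},
$$
with the empty tensor product taken to be $k[0]$. Tensor products and duals of bounded $p$-permutation complexes of $kP$-modules are again $p$-permutation complexes. The endosplit property is preserved because $(X\otimes Y)\otimes(X\otimes Y)^*\cong (X\otimes X^*)\otimes(Y\otimes Y^*)$, and a tensor product of split complexes is split. By the Künneth formula the homology of $X(f)$ is concentrated in degree $0$. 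There it equals the corresponding tensor product of the modules $\Omega_{P/R}(k)$ and their duals, which is an endopermutation module and is capped since each factor is. Lemma \ref{lem:h-marks}(i) then gives $h_{X(f)}=\sum_{R<P}n_R(\omega_{P/R}-1)=f$.

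For the Dade class, multiplication in $D_k(P)$ is induced by tensor product of capped endopermutation modules, and the inverse by duality. So $[H_0(X(f))]=\sum_{R<P}n_R[\Omega_{P/R}(k)]$. By the formula for $\Psi_P$ in Proposition \ref{prop:BY}, this equals $\Psi_P\big(\sum_R n_R\omega_{P/R}\big)=\Psi_P(f)$, since $\Psi_P(\omega_{P/P})=0$.

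The main obstacle I expect is the bookkeeping of degrees. I have to make sure the constant term $n_P\omega_{P/P}$ is exactly absorbed so that the homology sits in degree $0$; this is precisely where the hypothesis $f(1)=0$ is used, since $h_{X(f)}(1)$ is the homology degree. I also have to check that the homological mark of $C_S[-1]$ is $\omega_S-1$ under the paper's shift conventions, where $C_S$ itself has mark $\omega_S$. If those conventions turn out to give the opposite sign, I would replace $k[-1]$ by $k[1]$ accordingly.
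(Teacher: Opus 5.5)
Your proposal is correct and is essentially the paper's proof. The paper tensors the unshifted complexes $X_Q=C_{P/Q}$ and their duals together with an explicit factor $k[n_P]$, whereas you absorb that shift into the factors $C_{P/R}[-1]$ using $n_P=-\sum_{R<P}n_R$ (from $f(1)=0$); the two complexes are isomorphic because $(C[-1])^*\cong C^*[1]$, and your bookkeeping is right, since $C_{P/R}[-1]$ has homological mark $\omega_{P/R}-1$ and no sign adjustment is needed.
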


\begin{proof}
Let $\mathscr{S}$ be as in Lemma \ref{lem:omega-basis} with $P\in \mathscr{S}$, and write
$$
 f=\sum_{Q\in\mathscr{S}}n_Q\omega_{P/Q}.
$$
For an integer $n$, let $k[n]$ denote the complex with $k$ concentrated in degree $n$. Define
$$
 \begin{aligned}
 X(f)={}&k[n_P]
 \otimes_k
 \bigotimes_{\substack{Q\in\mathscr{S},\ Q<P\\n_Q>0}}
 X_Q^{\otimes n_Q}
 \otimes_k
 \bigotimes_{\substack{Q\in\mathscr{S},\ Q<P\\n_Q<0}}
 (X_Q^*)^{\otimes(-n_Q)}.
 \end{aligned}
$$
Tensor products, duals and shifts preserve the endosplit property.  Hence $X(f)$ is an endosplit $p$-permutation complex.  By Lemma \ref{lem:h-marks} (i) and \eqref{eq:h-standard}, we have $h_{X(f)}=f$.  In particular, the homology of $X(f)$ lies in degree
$$
 h_{X(f)}(1)=f(1)=0.
$$
The K\"unneth formula shows that the class of $H_0(X(f))$ is
$$
 \sum_{\substack{Q\in\mathscr{S}\\Q<P}}
  n_Q[\Omega_{P/Q}(k)].
$$
By the definition of $\Psi_P$ in Proposition \ref{prop:BY}, this sum is $\Psi_P(f)$.
\end{proof}

To verify the condition in Definition \ref{def:fusion-stability}, we must compare not only the noncontractible indecomposable summands, but also the contractible ones.  The following lemma shows that the latter can be made to agree after adding suitable split contractible complexes.

\begin{lemma}\label{lem:contractible-padding}
Let $Z$ be a bounded complex of $p$-permutation $kP$-modules, concentrated in degrees from $a$ to $b$. For a finite group $G$, a $kG$-module $W$, and an integer $i$, let
$$
 D_i(W)=\big(0\longrightarrow W\xrightarrow{\operatorname{id}_W}W\longrightarrow0\big)
$$
denote the complex whose two copies of $W$ are concentrated in degrees $i$ and $i-1$, respectively.  Thus $D_i(W)$ is contractible.  Let
$$
 Z^+=Z\oplus\bigoplus_{i=a+1}^{b}D_i(k).
$$
For every $Q\leq P$ and every injective homomorphism $\varphi:Q\to P$, the sets of isomorphism classes of contractible indecomposable direct summands with vertex $Q$ of $\Res_Q^P(Z^+)$ and ${}_\varphi Z^+$ are equal.
\end{lemma}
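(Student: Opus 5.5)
The plan is to use the standard description of contractible indecomposable complexes. Over $kQ$, a bounded complex of finitely generated modules is a direct sum of indecomposable complexes, and Krull--Schmidt holds because these complexes form a Krull--Schmidt category. Every contractible indecomposable complex is isomorphic to $D_i(W)$ for some indecomposable module $W$ and some integer $i$. Moreover, the vertex of $D_i(W)$ as a complex, meaning the minimal subgroup relative to which it is projective, equals the vertex of $W$: $D_i(W)$ is a direct summand of $\operatorname{Ind}_R^Q\Res_R^Q D_i(W)$ if and only if $W$ is a direct summand of $\operatorname{Ind}_R^Q\Res_R^Q W$.

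Both $\Res_Q^P(Z^+)$ and ${}_\varphi Z^+$ are bounded complexes of $p$-permutation $kQ$-modules, since restriction along injective maps preserves $p$-permutation modules. So any contractible indecomposable summand of either has the form $D_i(W)$ with $W$ an indecomposable $p$-permutation $kQ$-module. If this summand has vertex $Q$, then $W$ is an indecomposable $p$-permutation $kQ$-module with vertex $Q$, and since $Q$ is a $p$-group this forces $W\cong k$. Hence the sets to be compared are of the form $\{[D_i(k)] : i\in I\}$ and $\{[D_i(k)] : i\in I'\}$, and the task reduces to showing $I=I'$.

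Next I would pin down both sets exactly. Since $Z^+$ is concentrated in degrees $a,\dots,b$, so are $\Res_Q^P(Z^+)$ and ${}_\varphi Z^+$. A summand $D_i(k)$ occupies degrees $i$ and $i-1$, so it can only occur if $a+1\le i\le b$. This gives $I,I'\subseteq\{a+1,\dots,b\}$. Conversely, for each such $i$, the summand $D_i(k)$ of $Z^+$ restricts to $\Res_Q^P D_i(k)=D_i(k)$ and to ${}_\varphi D_i(k)=D_i(k)$, because $k$ is the trivial module and the identity differential is preserved. So $D_i(k)$ is a summand of both complexes, and $I=I'=\{a+1,\dots,b\}$.

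The only step needing care is the structural input: the classification of contractible indecomposables as $D_i(W)$, and the claim that the vertex of $D_i(W)$ is the vertex of $W$. For the first I would cite the standard fact that a contractible complex is split, so it decomposes as a sum of $D_i(W)$'s, and then apply Krull--Schmidt. For the second I would use Higman's criterion applied degreewise, together with the fact that $D_i(-)$ is an additive functor commuting with induction and restriction.
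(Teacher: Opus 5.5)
Your proposal is correct and follows essentially the same route as the paper. Both arguments classify the contractible indecomposables as $D_i(W)$, note that vertex $Q$ forces $W\cong k$, confine $i$ to $a<i\le b$ by the degree range, and observe that the padding supplies every such $D_i(k)$ on both sides; your explicit check that the vertex of $D_i(W)$ equals that of $W$ fills in a step the paper leaves implicit.
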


\begin{proof}
An elementary induction on the length of a contractible complex shows that every indecomposable contractible complex of $p$-permutation $kQ$-modules is of the form $D_i(W)$, where $W$ is an indecomposable $p$-permutation $kQ$-module. Indeed, in the largest nonzero degree, the contracting homotopy splits the differential, producing such a two-term direct summand; one then continues with the remaining shorter complex. Since $Q$ is a $p$-group, $W\cong k[Q/S]$ for some $S\leq Q$, and the vertex of $W$ is $S$. Hence $D_i(W)$ has vertex $Q$ if and only if $W\cong k$. Both restriction and transport of structure preserve the degree range $[a,b]$, and the added summands supply one copy of $D_i(k)$ for every $a<i\leq b$. Thus all possible contractible vertex-$Q$ classes occur on both sides, which proves the assertion.
\end{proof}

We are now ready to prove the main theorem.

\begin{proof}[Proof of Theorem \ref{theo:main}]
Suppose first that $V$ has an $\F$-stable endosplit $p$-permutation resolution $Z$. Let $Q\leq P$ and $\varphi\in\Hom_\F(Q,P)$. Restriction and transport of structure are exact and preserve the endosplit property. Hence $\Res_Q^P(Z)$ and ${}_\varphi Z$ are endosplit $p$-permutation resolutions of $\Res_Q^P(V)$ and ${}_\varphi V$, respectively, and
$$
 H_0\big(\Res_Q^P(Z)\big)\cong\Res_Q^P(V),\qquad
 H_0({}_\varphi Z)\cong{}_{\varphi}V.
$$
The functor $H_0$ preserves direct sums and sends isomorphic complexes to isomorphic modules. Moreover, the correspondence between the noncontractible indecomposable direct summands of an endosplit resolution and the indecomposable direct summands of its homology is vertex preserving; see \cite[Theorem 7.11.2]{Lin18b}. Since $Z$ is $\F$-stable, the noncontractible indecomposable direct summands with vertex $Q$ of $\Res_Q^P(Z)$ and ${}_\varphi Z$ have the same isomorphism classes. Applying $H_0$ therefore shows that the indecomposable direct summands with vertex $Q$ of $\Res_Q^P(V)$ and ${}_\varphi V$ have the same isomorphism classes. Thus $V$ is $\F$-stable.

Conversely, suppose that $V$ is $\F$-stable and that $\F$ is saturated. Then the class $d=[V]$ belongs to $D_k^\Omega(P,\F)$.  By Proposition \ref{prop:stable-lift}, there is an $f\in C(\F)$ such that $\Psi_P(f)=d$.  Since the constant function $\omega_{P/P}$ belongs to the kernel of $\Psi_P$, replacing $f$ by $f-f(1)\omega_{P/P}$ allows us to assume that $f(1)=0$.

Let $X=X(f)$ be the complex in Proposition \ref{prop:realise-f}, and let $M=H_0(X)$. The classes of $M$ and $V$ in $D_k(P)$ are equal. Since $V$ is an indecomposable capped endopermutation $kP$-module, it is its own cap. Hence the definition of equivalence in the Dade group implies that $V$ is isomorphic to a cap of $M$. By the direct summand correspondence for endosplit resolutions, $X$ has an indecomposable noncontractible direct summand $Y$ such that
$$
 H_0(Y)\cong V.
$$
Moreover, $Y$ is an endosplit $p$-permutation resolution of $V$.

For every $R\leq P$, the complex $Y(R)$ is noncontractible. Indeed, $\End_k(V)$ has the trivial module as a direct summand, and hence its Brauer construction at $R$ is nonzero. Applying the Brauer construction to the homotopy equivalence
$$
 Y\otimes_k Y^*\simeq \End_k(V)[0]
$$
shows that $Y(R)$ cannot be contractible. Since $Y(R)$ is a direct summand of $X(R)$, whose homology is concentrated in degree $f(R)$, it follows that
\begin{equation}\label{eq:hY=f}
 h_Y(R)=h_X(R)=f(R).
\end{equation}

Let $Q\leq P$ and let $\varphi\in\Hom_\F(Q,P)$.  For every $R\leq Q$, by Lemma \ref{lem:h-marks} (ii), equation \eqref{eq:hY=f}, and the $\F$-stability of $f$, we have
$$
 h_{\Res_Q^P(Y)}(R)=f(R)=f(\varphi(R))=h_{{}_\varphi Y}(R).
$$
Lemma \ref{lem:h-marks} (iii), applied over $Q$, now shows that the sets of isomorphism classes of noncontractible indecomposable direct summands with vertex $Q$ of $\Res_Q^P(Y)$ and ${}_\varphi Y$ are equal.

It remains only to take care of contractible direct summands, since these are also included in Definition \ref{def:fusion-stability}.  Apply Lemma \ref{lem:contractible-padding} to $Y$, and let $Y^+$ be the resulting complex.  Adding split contractible complexes does not change the homology or the endosplit property.  Lemma \ref{lem:contractible-padding} deals with the contractible vertex-$Q$ summands, while the preceding paragraph deals with the noncontractible ones.  Therefore $Y^+$ is an $\F$-stable endosplit $p$-permutation resolution of $V$.
\end{proof}

\section{Appendix: Alternative proof of Theorem \ref{theo:main} for abelian $p$-groups}\label{s4}

We present a direct/simpler proof of Theorem \ref{theo:main}, due to Linckelmann \cite[Theorem 7.11.9 (i)]{Lin18b}, for abelian $p$-groups $P$. In this case $D_k^\Omega(P)=D_k(P)$. The proof of \cite[Theorem 7.11.9 (i)]{Lin18b} seems slightly incomplete, but provides the idea of a proof. Let $P$ be a finite $p$-group. For $\alpha\in\Aut(P)$ and a $kP$-module (or a complex of $kP$-modules) $X$, write ${}^\alpha X={}_{\alpha^{-1}}X$. Thus the action of $u\in P$ on ${}^\alpha X$ is the action of $\alpha^{-1}(u)$ on $X$. If $E\leq\Aut(P)$, $X$ is called {\it $E$-stable} if
${}^\alpha X\cong X$ for any $\alpha\in E$.
\begin{lemma}\label{lem:abelian-normal-form}
Let $P$ be a finite abelian $p$-group. For $Q<P$, set
$$
 \omega_Q=\left[\Inf_{P/Q}^P\Omega_{P/Q}(k)\right]\in D_k(P).
$$
Then
\begin{equation}\label{eq:abelian-dade-group}
 D_k(P)\cong\bigoplus_{\substack{Q<P\\P/Q\text{ noncyclic}}}\Z\omega_Q\oplus\bigoplus_{\substack{Q<P,\ |P:Q|\geq3\\P/Q\text{ cyclic}}}(\Z/2\Z)\omega_Q.
\end{equation}
Let $E\leq\Aut(P)$ and let $M$ be an $E$-stable indecomposable endopermutation $kP$-module with vertex $P$. There is a unique family $\mathbf{n}=(n_Q)_{Q<P,\ |P:Q|\geq3}$, where $n_Q\in\Z$ if $P/Q$ is noncyclic and $n_Q\in\{0,1\}$ if $P/Q$ is cyclic, such that $M$ is isomorphic to the indecomposable direct summand with vertex $P$ of
\begin{equation}\label{eq:abelian-normal-form}
 T(\mathbf{n})=\bigotimes_{\substack{Q<P\\|P:Q|\geq3}}\Inf_{P/Q}^P\Omega_{P/Q}^{\,n_Q}(k).
\end{equation}
Here $\Omega_{P/Q}^{\,n}(k)$ denotes the $n$-th Heller translate of the trivial $k(P/Q)$-module, and in the cyclic case the exponent is read modulo $2$.
Moreover, $n_{\alpha(Q)}=n_Q$ for all $Q<P$ and all $\alpha\in E$.
\end{lemma}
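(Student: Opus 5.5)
The plan is to take the structure of $D_k(P)$ for abelian $P$ from Dade's classification, and then read off the normal form and the $E$-invariance of the exponents from the uniqueness of coordinates in \eqref{eq:abelian-dade-group}.

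\textbf{Step 1: the decomposition \eqref{eq:abelian-dade-group}.} Dade's classification of endopermutation modules over abelian $p$-groups (see \cite[\S 12--13]{Th07} or \cite{Bouc:The Dade group}) states that $D_k(P)$ is generated by the classes $\omega_Q=[\Inf_{P/Q}^P\Omega_{P/Q}(k)]$, $Q<P$. The relations are as follows.
\begin{enumerate}
\item If $|P:Q|=2$, then $\Omega_{P/Q}(k)\cong k$, so $\omega_Q=0$.
\item If $P/Q$ is cyclic with $|P:Q|\geq 3$, then $\omega_Q$ has order exactly $2$, by periodicity of $\Omega$ over a cyclic group.
\item The classes $\omega_Q$ with $P/Q$ noncyclic are $\Z$-linearly independent, and they are independent from the $2$-torsion part.
\end{enumerate}
I would also record that over the group $P/Q$ the $P/Q$-set $P/Q$ is regular. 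Hence $\Omega_{P/Q}(k)$ is the ordinary Heller translate $\Omega(k)$, and its $n$-th tensor power agrees, up to projective summands, with $\Omega^{n}_{P/Q}(k)$. This justifies the notation in \eqref{eq:abelian-normal-form}.

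\textbf{Step 2: existence and uniqueness of $\mathbf{n}$.} Since $M$ is indecomposable with vertex $P$, it is capped and is its own cap. By Step 1 there is a unique family $\mathbf{n}$, with $n_Q\in\{0,1\}$ in the cyclic case, such that $[M]=\sum n_Q\omega_Q$. Addition in $D_k(P)$ is induced by tensor product, and inflation is compatible with it. Therefore $T(\mathbf{n})$ is a capped endopermutation module whose class is $\sum n_Q\omega_Q$; for negative exponents I use duals, which represent inverses. Two capped endopermutation modules with the same Dade class have isomorphic caps. So $M$ is isomorphic to the vertex-$P$ summand of $T(\mathbf{n})$.

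Uniqueness follows in the other direction. If $M$ is also the cap of $T(\mathbf{n}')$, then $\sum n_Q\omega_Q=\sum n'_Q\omega_Q$. The coordinates in \eqref{eq:abelian-dade-group} are unique, and the chosen exponent ranges are normalized, so $\mathbf{n}=\mathbf{n}'$.

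\textbf{Step 3: $E$-invariance.} For $\alpha\in\Aut(P)$, transport of structure $X\mapsto{}^\alpha X$ is compatible with tensor products and with equivalence of endopermutation modules. It therefore induces an automorphism of $D_k(P)$. A direct check on the permutation module $k[P/Q]$ and the augmentation map gives
\[
{}^\alpha\Inf_{P/Q}^P\Omega_{P/Q}(k)\cong\Inf_{P/\alpha(Q)}^P\Omega_{P/\alpha(Q)}(k),
\]
hence ${}^\alpha\omega_Q=\omega_{\alpha(Q)}$. The automorphism $\alpha$ preserves the index $|P:Q|$ and the cyclicity of $P/Q$, so it permutes the index set of \eqref{eq:abelian-dade-group} within each type.

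Now take $\alpha\in E$. Then ${}^\alpha M\cong M$, so
\[
\sum_Q n_Q\omega_Q=[M]=[{}^\alpha M]=\sum_Q n_Q\omega_{\alpha(Q)}=\sum_Q n_{\alpha^{-1}(Q)}\omega_Q .
\]
Comparing coordinates gives $n_{\alpha^{-1}(Q)}=n_Q$ for all $Q$ and all $\alpha\in E$. Since $E$ is a group, this is the same as $n_{\alpha(Q)}=n_Q$ for all $\alpha\in E$.

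\textbf{Main obstacle.} The substantive input is Step 1: that the only relations among the $\omega_Q$ are the stated ones, namely that the noncyclic quotients contribute free generators and the cyclic quotients contribute exactly $\Z/2\Z$. I would not reprove this. I would quote the abelian case of the classification, Dade's original theorem as presented in \cite{Th07}, together with the torsion computation for cyclic groups. The only care needed is that this theorem uses inflated relative syzygies as its generators, and I would check that these match the $\omega_Q$ above.
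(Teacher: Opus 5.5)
Your proposal is correct and follows essentially the paper's proof. Both take \eqref{eq:abelian-dade-group} and the normal form from Dade's classification, then use ${}^\alpha\omega_Q=\omega_{\alpha(Q)}$ and the uniqueness of coordinates, read in $\Z/2\Z$ with representatives in $\{0,1\}$ in the cyclic case, to get $n_{\alpha(Q)}=n_Q$; you simply spell out a little more, such as deriving the normal form from the Dade class via caps and checking the transport on $k[P/Q]$ explicitly.
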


\begin{proof}
The decomposition \eqref{eq:abelian-dade-group} and the normal form \eqref{eq:abelian-normal-form} follow from Dade's classification of the indecomposable endopermutation modules for abelian $p$-groups; see \cite[Theorem 12.5]{Dade2} or \cite[Theorem 5.2]{Th07}, or \cite[Theorem 7.8.1]{Lin18b}. For $\alpha\in\Aut(P)$, transport of structure commutes with inflation and Heller translates, and the isomorphism $P/Q\cong P/\alpha(Q)$ induced by $\alpha$ shows that ${}^\alpha\omega_Q=\omega_{\alpha(Q)}$. Since ${}^\alpha M\cong M$, the class $[M]$ is fixed by $\alpha$. The uniqueness of the coefficients in \eqref{eq:abelian-dade-group} therefore implies that $n_{\alpha(Q)}=n_Q$, with the equality interpreted in $\Z/2\Z$ when $P/Q$ is cyclic. Choosing the representative in $\{0,1\}$ gives the asserted equality of integers in that case.
\end{proof}

\begin{lemma}\label{lem:ambient-resolution}
Keep the notation of Lemma \ref{lem:abelian-normal-form}. The module $T(\mathbf{n})$ has an $E$-stable endosplit $p$-permutation resolution $C(\mathbf{n})$.
\end{lemma}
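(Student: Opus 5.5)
The plan is to build $C(\mathbf{n})$ as a tensor product of explicit two-term complexes, one family for each orbit of $E$ on the subgroups $Q$ with $n_Q\neq0$. The $E$-stability will come from the fact that $E$ permutes the factors, which is guaranteed by $n_{\alpha(Q)}=n_Q$ in Lemma \ref{lem:abelian-normal-form}.

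\emph{Building block.} Fix $Q<P$ with $|P:Q|\geq3$. By Proposition \ref{prop:relative-syzygy-resolution}, applied to the group $P/Q$ and the regular $P/Q$-set, $C_{P/Q}[-1]$ is an endosplit $p$-permutation resolution of $\Omega_{P/Q}(k)=\Omega_{P/Q}$ of the trivial $k(P/Q)$-module. Inflation to $P$ is exact and preserves $p$-permutation modules, tensor products, duals and split complexes. Hence $X_Q:=\Inf_{P/Q}^P(C_{P/Q}[-1])$ is an endosplit $p$-permutation resolution of $\Inf_{P/Q}^P\Omega_{P/Q}(k)$, and $X_Q^*$ is one of its dual, which is $\Inf_{P/Q}^P\Omega^{-1}_{P/Q}(k)$. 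For $n_Q>0$ I take $X_Q^{\otimes n_Q}$, and for $n_Q<0$ I take $(X_Q^*)^{\otimes(-n_Q)}$. Their $H_0$ is $\Inf(\Omega(k)^{\otimes n_Q})$, which equals $\Inf\Omega^{n_Q}_{P/Q}(k)$ plus projective $k(P/Q)$-summands. Here the Künneth formula applies since we work over a field.

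\emph{Assembly.} Define $C(\mathbf{n})=\bigotimes_{Q}X_Q^{(n_Q)}$ with the factors above. It is endosplit, since tensor products of endosplit complexes are endosplit. Its homology is concentrated in degree $0$ and equals $\bigotimes_Q \Inf\Omega^{\otimes n_Q}(k)$. This module has $T(\mathbf{n})$ as a direct summand, and in fact it has the same cap. If equality with $T(\mathbf{n})$ is wanted literally, I would replace $T(\mathbf{n})$ by this homology module; the paper only uses the cap of $T(\mathbf{n})$ afterwards. Alternatively, split off the summand of $C(\mathbf{n})$ corresponding to $T(\mathbf{n})$ via \cite[Theorem 7.11.2]{Lin18b}, but then $E$-stability must be rechecked.

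\emph{$E$-stability.} For $\alpha\in E$, transport of structure gives ${}^\alpha\Inf_{P/Q}^P(Z)\cong\Inf_{P/\alpha(Q)}^P({}^{\bar\alpha}Z)$, where $\bar\alpha:P/Q\to P/\alpha(Q)$ is the induced isomorphism. The complex $C_{P/Q}$ is canonical: it is the augmentation of the regular set, and $\bar\alpha$ carries the regular $P/Q$-set to the regular $P/\alpha(Q)$-set. Hence ${}^\alpha X_Q\cong X_{\alpha(Q)}$, and also ${}^\alpha X_Q^*\cong X_{\alpha(Q)}^*$. Since $n_{\alpha(Q)}=n_Q$, the map $\alpha$ permutes the tensor factors of $C(\mathbf{n})$. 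Because the tensor product of complexes is symmetric monoidal up to isomorphism, we get ${}^\alpha C(\mathbf{n})\cong C(\mathbf{n})$.

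The main subtlety I expect is in the cyclic case, when $n_Q=1$ is only defined modulo $2$. There, $E$-stability of the choice $n_Q\in\{0,1\}$ is exactly what the lemma provides, so no problem arises. The only other point needing care is the precise identification of homology with $T(\mathbf{n})$ versus its cap, as discussed above.
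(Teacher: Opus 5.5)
Your complex $C(\mathbf{n})$ is not a resolution of $T(\mathbf{n})$, so the lemma as stated is not proved. The problem arises whenever $P/Q$ is noncyclic and $|n_Q|\geq 2$. Then the factor $X_Q^{\otimes n_Q}$ (or $(X_Q^*)^{\otimes(-n_Q)}$) has homology
$$\Inf_{P/Q}^P\bigl(\Omega_{P/Q}(k)^{\otimes n_Q}\bigr)\cong\Inf_{P/Q}^P\Omega^{n_Q}_{P/Q}(k)\oplus(\text{inflated projectives}),$$
and the projective part is genuinely nonzero. For example, if $P/Q\cong C_2\times C_2$, then $\dim\Omega(k)^{\otimes 2}=9$ while $\dim\Omega^2(k)=5$, so $\Omega(k)^{\otimes 2}\cong\Omega^2(k)\oplus k(P/Q)$. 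Hence $H_0(C(\mathbf{n}))$ is a larger module $T'(\mathbf{n})$ that merely contains $T(\mathbf{n})$ as a summand with the same cap. What you have actually shown is that $T'(\mathbf{n})$ has an $E$-stable endosplit $p$-permutation resolution.

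Neither of your remedies closes this. Replacing $T(\mathbf{n})$ by the homology module changes the statement. Splitting off a summand of $C(\mathbf{n})$ resolving $T(\mathbf{n})$ is exactly where $E$-stability stops being automatic: that summand depends on a choice of idempotent, and you leave its compatibility with transport by $E$ unverified. Your weaker version would in fact suffice for the later proof of Proposition \ref{prop:abelian-stable-resolution}, which only needs $M$ to be a summand of $H_0(C)$. But it is not the lemma.

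The paper avoids the issue by using a different building block, $B_{P/Q}(n)$, instead of tensor powers of the two-term complex. For $n>0$ this is the complex $P_{n-1}\to\cdots\to P_0\to k$ cut from a minimal projective resolution $P_\bullet$ of $k$ over $P/Q$, shifted so that its homology $\Omega^n_{P/Q}(k)$ sits in degree $0$. For $n<0$ one takes its dual (Linckelmann, Proposition 7.11.7 and Corollary 7.11.8). This resolves $\Omega^{n}_{P/Q}(k)$ exactly. The identity ${}^{\bar\alpha}B_{P/Q}(n)\cong B_{P/\alpha(Q)}(n)$ then follows from uniqueness of minimal projective resolutions up to isomorphism, which replaces your appeal to canonicity of the regular $P/Q$-set. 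With $\Inf_{P/Q}^P B_{P/Q}(n_Q)$ in place of your factors, the rest of your argument is exactly the paper's: tensor products, K\"unneth, and permuting the factors using $n_{\alpha(Q)}=n_Q$.

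If you prefer to keep your factors, you would have to fix each $Q$ separately. Pass to the contractible-free summand $Y_Q$ of $X_Q^{\otimes n_Q}$ that resolves the indecomposable module $\Inf_{P/Q}^P\Omega^{n_Q}_{P/Q}(k)$. Then prove ${}^\alpha Y_Q\cong Y_{\alpha(Q)}$ for $\alpha\in E$, as in the proof of Proposition \ref{prop:abelian-stable-resolution}, using:
\begin{enumerate}
\item the isomorphism $\End_{K^b(kP)}(X)\cong\End_{kP}(H_0(X))$;
\item conjugacy of primitive idempotents with isomorphic images;
\item Krull--Schmidt for complexes.
\end{enumerate}
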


\begin{proof}
For a finite $p$-group $R$ and $n\in\Z$, let $B_R(n)$ be the endosplit $p$-permutation resolution of $\Omega_R^n(k)$ obtained by truncating a minimal projective resolution of $k$, shifting its homology to degree $0$, and using the dual complex when $n<0$; set $B_R(0)=k[0]$. These are the resolutions constructed in \cite[Proposition 7.11.7 and Corollary 7.11.8]{Lin18b}. If $f:R\to R'$ is a group isomorphism, the transport along $f$ of a minimal projective resolution of $k$ is again a minimal projective resolution of $k$. By the uniqueness of minimal projective resolutions, the transport of $B_R(n)$ is isomorphic to $B_{R'}(n)$ as a complex. Define
\begin{equation}\label{eq:ambient-resolution}
 C(\mathbf{n})=\bigotimes_{\substack{Q<P\\|P:Q|\geq3}}\Inf_{P/Q}^P B_{P/Q}(n_Q).
\end{equation}
By \cite[Proposition 7.11.6 (ii)]{Lin18b}, this is an endosplit $p$-permutation resolution of $T(\mathbf{n})$. For $\alpha\in E$, transport by $\alpha$ permutes the tensor factors in \eqref{eq:ambient-resolution} according to $Q\mapsto\alpha(Q)$. By Lemma \ref{lem:abelian-normal-form}, the corresponding exponents are equal. The isomorphisms between the transported complexes $B_{P/Q}(n_Q)$ and $B_{P/\alpha(Q)}(n_{\alpha(Q)})$, together with the permutation isomorphisms for tensor factors, therefore yield ${}^\alpha C(\mathbf{n})\cong C(\mathbf{n})$.
\end{proof}

\begin{proposition}[{\cite[Theorem 7.11.9 (i)]{Lin18b}}]\label{prop:abelian-stable-resolution}
Let $P$ be a finite abelian $p$-group, let $E\leq\Aut(P)$, and let $M$ be an $E$-stable indecomposable endopermutation $kP$-module with vertex $P$. Then $M$ has an $E$-stable endosplit $p$-permutation resolution.
\end{proposition}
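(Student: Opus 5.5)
We will take the ambient complex $C(\mathbf{n})$ of Lemma \ref{lem:ambient-resolution} and extract from it a summand that is again $E$-stable. First, apply Lemma \ref{lem:abelian-normal-form} to $M$; this gives the family $\mathbf{n}$, with $n_{\alpha(Q)}=n_Q$ for all $\alpha\in E$, and shows that $M$ is isomorphic to the cap of $T(\mathbf{n})$. Next, Lemma \ref{lem:ambient-resolution} provides an $E$-stable endosplit $p$-permutation resolution $C=C(\mathbf{n})$ of $T(\mathbf{n})$. By the direct summand correspondence \cite[Theorem 7.11.2]{Lin18b}, the noncontractible indecomposable summands of $C$ correspond bijectively, preserving vertices, to the indecomposable summands of $T(\mathbf{n})$. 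In particular, there is a unique (up to isomorphism) noncontractible indecomposable summand $Y$ of $C$ with vertex $P$, and it satisfies $H_0(Y)\cong M$. This $Y$ is an endosplit $p$-permutation resolution of $M$.

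The core step is to show ${}^\alpha Y\cong Y$ for $\alpha\in E$. Since ${}^\alpha C\cong C$, the complex ${}^\alpha Y$ is a noncontractible indecomposable summand of ${}^\alpha C\cong C$. Transport of structure preserves vertices, so ${}^\alpha Y$ has vertex $P$. Because the summand of $C$ with vertex $P$ is unique, we get ${}^\alpha Y\cong Y$. (One could also argue via homology: $H_0({}^\alpha Y)\cong{}^\alpha M\cong M$, and the correspondence then identifies ${}^\alpha Y$ with $Y$ among the summands of $C$.) For the uniqueness claim itself, we use the Krull--Schmidt property of complexes of finitely generated modules over the finite-dimensional algebra $kP$ together with the bijectivity in \cite[Theorem 7.11.2]{Lin18b}. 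The multiplicity of the cap in $T(\mathbf{n})$ is one, since caps occur exactly once in an endopermutation module.

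The step I expect to need the most care is the multiplicity-one assertion. If the cap could appear with multiplicity greater than one, $Y$ would still be unique up to isomorphism, so the argument would not change. Since only isomorphism classes matter, Krull--Schmidt suffices and the multiplicity issue is in fact harmless. If the intended notion of $E$-stability also involves contractible summands, we would add $\bigoplus_i D_i(k)$ as in Lemma \ref{lem:contractible-padding}; this padding is $E$-stable because ${}^\alpha D_i(k)\cong D_i(k)$. Finally, if one wants an explicit construction of the summand rather than an existence statement, we would instead use \cite[Proposition 7.11.6]{Lin18b} to remove the non-cap summands. The existence argument above is enough for the statement.
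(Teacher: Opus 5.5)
Your proof is correct and is essentially the paper's argument. The paper extracts the summand of $C(\mathbf{n})$ belonging to $M$ via a primitive idempotent of $\End_{K^b(kP)}(C)\cong\End_{kP}(T)$, identifies ${}^\alpha X$ with $X$ through its homology ${}^\alpha M\cong M$, and then strips contractible summands to upgrade the homotopy equivalence to an isomorphism; your appeal to the summand correspondence and to the uniqueness of the vertex-$P$ noncontractible summand packages these same steps. (One aside is false but unused: an endopermutation module can contain its cap with multiplicity greater than one, e.g.\ $k\oplus k$, so only the Krull--Schmidt uniqueness up to isomorphism, which you correctly fall back on, is needed.)
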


\begin{proof}
Let $T=T(\mathbf{n})$ and $C=C(\mathbf{n})$ be as in Lemmas \ref{lem:abelian-normal-form} and \ref{lem:ambient-resolution}. Let $\End_k(C)$ denote the endomorphism complex of $C$, and write $K^b(kP)$ for the bounded homotopy category of finitely generated $kP$-modules. Since $\End_k(C)\cong C\otimes_k C^*$, this is a split complex of $kP$-modules. Consequently, taking $P$-fixed points commutes with taking its homology. Since $H_0(C)\cong T$, we obtain an algebra isomorphism
\begin{equation}\label{eq:homotopy-endomorphism-algebra}
 \begin{aligned}
 \End_{K^b(kP)}(C)&\cong H^0\!\left(\End_k(C)^P\right)\\
 &\cong H^0\!\left(\End_k(C)\right)^P\\
 &\cong\End_k(T)^P=\End_{kP}(T).
 \end{aligned}
\end{equation}
Choose a primitive idempotent in $\End_{kP}(T)$ whose image is isomorphic to $M$. Under \eqref{eq:homotopy-endomorphism-algebra}, this idempotent determines a decomposition $C\simeq X\oplus X'$ such that $H_0(X)\cong M$. By the direct summand argument for endosplit resolutions, $X$ is an endosplit $p$-permutation resolution of $M$; see \cite[Propositions 7.11.2 and 7.11.6 (iii)]{Lin18b}.

Let $\alpha\in E$. By Lemma \ref{lem:ambient-resolution}, ${}^\alpha C\cong C$, so ${}^\alpha X$ can be regarded as a direct summand of $C$ in $K^b(kP)$. Its homology is ${}^\alpha M\cong M$. The primitive idempotents of $\End_{kP}(T)$ whose images are isomorphic to $M$ form a single conjugacy class. Therefore, by the algebra isomorphism \eqref{eq:homotopy-endomorphism-algebra}, we have ${}^\alpha X\simeq X$. Finally, delete all nonzero contractible direct summands of $X$. The resulting complex is still an endosplit $p$-permutation resolution of $M$, and we replace $X$ by it. Neither $X$ nor ${}^\alpha X$ has a nonzero contractible direct summand. In a Krull--Schmidt category, a homotopy equivalence between two bounded complexes with this property is an isomorphism of complexes. Hence ${}^\alpha X\cong X$ for every $\alpha\in E$.
\end{proof}

\begin{proof}[Alternative proof of Theorem \ref{theo:main} for abelian $p$-groups]
Assume that $P$ is abelian, and keep the notation and hypotheses of Theorem \ref{theo:main}. Set $E=\Aut_\F(P)$. Since $E$ is closed under inverses, the $\F$-stability of $V$ implies that $V$ is $E$-stable with the convention at the beginning of this section. By Proposition \ref{prop:abelian-stable-resolution}, $V$ has an $E$-stable endosplit $p$-permutation resolution $X_V$.

Let $Q\leq P$ and let $\varphi\in\Hom_\F(Q,P)$. Since $P$ is abelian, every subgroup of $P$ is fully $\F$-centralised. Moreover, the subgroup $N_\varphi$ occurring in the extension axiom is equal to $P$, since conjugation by every element of $P$ is the identity. The extension axiom therefore extends $\varphi$ to an element $\alpha\in\Aut_\F(P)=E$. By the convention at the beginning of this section and the $E$-stability of $X_V$, we have ${}_\varphi X_V=\Res_Q^P({}^{\alpha^{-1}}X_V)\cong\Res_Q^P(X_V)$. Thus $X_V$ is $\F$-stable in the sense of Definition \ref{def:fusion-stability}, and hence it is the required endosplit $p$-permutation resolution of $V$.
\end{proof}

\begin{remark}
{\rm The proof above uses only the extension axiom for $\F$, and not the Sylow axiom. Hence the same proof applies to any fusion category on an abelian $p$-group which satisfies the extension axiom. In particular, the block fusion category associated with a maximal Brauer pair satisfies the extension axiom over any field of characteristic $p$. Indeed, in the proof of \cite[Theorem 8.5.2]{Lin18b}, the splitting field hypothesis is used only to verify the Sylow axiom; the proof of the extension axiom does not require this hypothesis. }
\end{remark}

\bigskip\noindent\textbf{Acknowledgements.}\quad The author would like to thank Prof. Markus Linckelmann for some helpful discussions and thank City St George's, University of London for its comfortable working environment. The author was supported by NSFC (12501024, 12471016), China Postdoctoral Science Foundation (GZC20252006, 2025T001HB) and China Scholarship Council (202506770066).

\bigskip
{\footnotesize School of Mathematics and Statistics, Central China Normal University, Wuhan 430079, China 
	
	Email address: xinhuang@mails.ccnu.edu.cn}

\end{document}